\g@addto@macro\bfseries{\boldmath} \makeatother \oddsidemargin=5mm
\newtheorem{theorem}{Theorem}
\newtheorem{acknowledgement}[theorem]{Acknowledgements}
\newtheorem{corollary}[theorem]{Corollary}
\newtheorem{definition}[theorem]{Definition}
\newtheorem{example}[theorem]{Example}
\newtheorem{exercise}[theorem]{Exercise}
\newtheorem{lemma}[theorem]{Lemma}
\newtheorem{proposition}[theorem]{Proposition}
\newenvironment{proof} {\par\noindent{\bf Proof.}} {\hfill$\scriptstyle\square$}
\def\c{\mathbb{C}}
\def\s{\mathbb{S}}
\def\q{\mathbb{Q}}
\def\z{\mathbb{Z}}
\def\r{\mathbb{R}}
\def\overK{\overline{K}}
\def\scre{\mathscr{E}}
\def\scri{\mathscr{I}}
\def\scrh{\mathscr{H}}
\def\scrot{\mathscr{O}}
\def\onr{\mathrm{O}_{n}(\r)}
\def\glnK{\mathrm{GL}_{n}(K)}
\def\glnr{\mathrm{GL}_{n}(\r)}
\def\glnR{\mathrm{GL}_{n}(\r)}
\def\gltK{\mathrm{GL}_{2}(K)}
\def\gltr{\mathrm{GL}_{2}(\r)}
\def\glnk{\mathrm{GL}_{n}(k)}
\def\glnK{\mathrm{GL}_{n}(K)}
\def\sltr{\mathrm{SL}_{2}(\r)}
\def\psltr{\mathrm{PSL}_{2}(\r)}
\def\sltK{\mathrm{SL}_{2}(K)}
\def\sltra{\mathfrak{sl}_{2}(\r)}
\def\slnr{\mathrm{SL}_{n}(\r)}
\def\slnK{\mathrm{SL}_{n}(K)}
\def\sonK{\mathrm{SO}_{n}(K)}
\def\sonr{\mathrm{SO}_{n}(\r)}
\def\sonR{\mathrm{SO}_{n}(\r)}
\def\sonr{\mathrm{SO}_{n}(\r)}
\def\sotr{\mathrm{SO}_{2}(\r)}
\def\autt{\mathrm{Aut}(T)}
\def\onK{\mathrm{O}_{n}(K)}
\def\gr{\mathrm{Gr}}
\def\ac{algebraically closed~}
\def\cc{conjugation complete}
\def\iff{if and only if~}
\def\tr{\mathrm{Tr}}
\def\sp{\mathrm{Sp}}
\def\ig{$\mathcal{IG}$}
\def\tig{$\mathcal{TIG}$}
\begin{document}
\title{A few remarks on invariable generation in
infinite groups}

\author{Gil Goffer \\
The Weizmann Institute of Science\\
 76100, Rehovot, Israel \and
Gennady A.Noskov \\
 The Sobolev Institute of Mathematics \\
644043, Omsk, Russia  }

\date{February 12, 2018}

\maketitle

\begin{abstract}
  A subset $S$ of a group $G$ invariably generates $G$ if $G$ is generated by $\{ s^{g(s)} | s\in S\} $
  for any choice of $g(s)\in G, s\in S$.
  A topological group $G$ is said to be \ig~ if it is invariably generated by some subset $S\subseteq G$, and \tig~ if it is topologically invariably generated by some subset $S\subseteq G$.
  In this paper we study the problem of (topological) invariable generation for linear groups and
  for automorphism groups of trees.
       Our main results show that the Lie group $\sltr $ and the automorphism group of a
       regular tree are \tig,
  and that the groups $PSL_m(K) ,m\geq 2$ are not \ig~ for countable fields
  of infinite transcendence degree over a prime field.
\end{abstract}

\tableofcontents

\section{Introduction}


The notion of invariable generation of groups goes back at least as far as Jordan's paper  of 1872.

\begin{theorem}[\cite{jordan,serre-jordan}]. Assume that a finite group $G$ acts transitively on a set
$X$ of cardinality at least 2. Then there exists $g\in G$ displacing every element of $X$, i.e.
 $g$ acts fixed-point-freely on $X$.
\end{theorem}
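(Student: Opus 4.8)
The plan is to use a counting argument based on the orbit-counting theorem (Burnside's lemma). First I would recall that for a finite group $G$ acting on a finite set $X$, the number of orbits equals the average number of fixed points:
$$
|X/G| = \frac{1}{|G|} \sum_{g \in G} |\mathrm{Fix}(g)|,
$$
where $\mathrm{Fix}(g) = \{x \in X : g\cdot x = x\}$ and $X/G$ denotes the set of orbits. Since the action is transitive there is exactly one orbit, so this identity collapses to $\sum_{g \in G} |\mathrm{Fix}(g)| = |G|$.

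Next I would argue by contradiction. Suppose every $g \in G$ has at least one fixed point, i.e. $|\mathrm{Fix}(g)| \geq 1$ for all $g$. The identity element alone contributes $|\mathrm{Fix}(e)| = |X| \geq 2$, while each of the remaining $|G|-1$ elements contributes at least $1$. Summing these lower bounds gives
$$
\sum_{g \in G} |\mathrm{Fix}(g)| \geq (|G|-1) + |X| \geq |G| + 1,
$$
which strictly exceeds $|G|$ and so contradicts the equality obtained from transitivity. Hence some $g \in G$ must satisfy $|\mathrm{Fix}(g)| = 0$, i.e. $g$ displaces every point of $X$, as required.

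I do not expect a genuine obstacle here: the argument is a two-line averaging computation once the right tool is identified. The only real choice is recognizing that the orbit-counting formula is what converts the qualitative hypothesis (transitivity) into the sharp numerical identity $\sum_g |\mathrm{Fix}(g)| = |G|$; after that, the assumption $|X| \geq 2$ is precisely what forces the strict inequality that closes the proof. It is worth noting that the bound $|X|\geq 2$ is necessary, since for $|X|=1$ the only group element fixes the unique point.
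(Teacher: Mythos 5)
Your proof is correct, but it takes a different route from the one in the paper. You use the orbit-counting (Burnside) identity $\sum_{g\in G}|\mathrm{Fix}(g)|=|G|$ for a transitive action and derive a contradiction from the fact that the identity alone already contributes $|X|\geq 2$ fixed points; this is the classical argument (essentially the one in Serre's note on Jordan's theorem). The paper instead routes the statement through its organizing concept of Wiegold subgroups: by the equivalence $\mathbf{I}\Leftrightarrow\mathbf{II}$ of Lemma \ref{l-4-equivalences}, a fixed-point-free element fails to exist only if the point stabilizer $H=G_x$ is a proper subgroup whose conjugates cover $G$, and the paper then counts the union $\bigcup_{Hg\in H\backslash G}H^{Hg}$ of the at most $|G:H|$ distinct conjugates, each of size $|H|$ and all containing the identity, to see that this union has strictly fewer than $|G:H|\cdot|H|=|G|$ elements. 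The two arguments are dual double-counts of the incidence set $\{(g,x):gx=x\}$, but they buy different things: your version is shorter and fully self-contained, while the paper's version exhibits Jordan's theorem as the finite prototype of the "no closed Wiegold subgroup" criterion that drives the rest of the paper. One small point you should make explicit: Burnside's lemma requires $X$ to be finite, which here follows because a transitive action of the finite group $G$ forces $|X|\leq |G|$.
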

  Independently, but more than hundred years later, J. Wiegold  introduced the class
 $\mathfrak{X}$ of groups $G$ with the
 property that whenever $G$ acts transitively on a set $X$ of cardinality at least 2,
 there exists an  element $g\in G$ acting fixed-point-freely on $X$. (See \cite{wiegold76}).
  Thus Jordan's theorem above says that every finite group belongs to the class $\mathfrak{X}$.
  Not only Wiegold failed to mention Jordan,
  but also he claims indirectly that Jordan's theorem is obvious.
 ``Observe that, in particular, finite groups and soluble groups are in $\mathfrak{X}$  -- obvious facts once noted, though I know of no reference to
them in the literature''. (See \cite{wiegold76}).
 Wiegold gave two more characterizations of the class $\mathfrak{X}$.
 Namely, $G\in \mathfrak{X}$ \iff  for every proper subgroup $H \leq G$ the union
 $ \bigcup \{H^g : g \in G\}$ does not cover the whole $G$.
  (Here, we use the common notation $H^g$ to denote the conjugate $g^{-1}Hg$ of $H$.)
 Secondly,  $G\in \mathfrak{X}$ \iff every  subset,
 containing at least one representative
 of every conjugacy class of $G$,  generates $G$.


 These characterizations lead naturally to the following definitions (of our own,
 though the  concepts implicitly occurred in above conversation).
 A subset $S$ of a group $G$ is called \textsf{\cc~}(in $G$) if it meets every non-trivial
 conjugacy class of $G$. A proper  subgroup $W$ of a group $G$ is called a \textsf{Wiegold subgroup} if $G$ is covered by conjugates of $W$, i.e. $G=\bigcup _{g\in G}W^{g}$. This is equivalent to saying that $W\leq G$ is proper, and $W$ is \cc.

   In these terms it is easy to see that the class $\mathfrak{X}$ consists of those groups that
  have no Wiegold subgroups, or equivalently, groups in which every \cc~ set generates $G$.
  Now Jordan's theorem becomes really ``obvious'' 
  for finite groups, by a counting argument.
  Indeed, arguing by contradiction, suppose that $G$ is a finite group with  a Wiegold subgroup $W$.
  Then $G=\bigcup W^g$. There are at most $|G:W|$ members in this union, each member is of cardinality $|W|$ and all members contain the identity
  element, hence the cardinality of the union is strictly less than $|G:W|\cdot |W|=|G|$,
  contradicting that $W$ is Wiegold.

 Independently of Wiegold, but somewhat later, J. D. Dixon has
 invented the notion of invariable generation for finite groups (see \cite{dixon}).
 A subset $S$ of a finite group $G$ \textsf{invariably generates}
 $G$ if $G$ is generated by $\{ s^{g(s)} | s\in S\} $
  for any choice of $g(s)\in G, s\in S$.
    This notion was motivated by Chebotarev's Density Theorem \cite{chebotarev-dense}.
 W. M. Kantor, A. Lubotzky and A. Shalev,  following Dixon, have defined a
 (not necessarily finite) group $G$ to be
 \textsf{invariably generated} if any \cc~ subset of $G$ generates $G$ (see \cite{KLS2}).
 They also discovered that the class \ig~
 of invariably generated groups coincides with  Wiegold's class $\mathfrak{X}$.
 Since then a few works have been done showing that specific families of groups are invariably
 generated by given sets, or proving other groups are not \ig~ by any subset.
 See the papers \cite{dixon,gelander-convergence,gelander-congruence,gelander-thompson,KLS1,KLS2,wiegold76,wiegold77}
 containing
 a lot of information on the class \ig.

 In this paper we study the notion  of \textsf{topological invariable generation}
 (abbreviated to \tig).
  \tig~ is a weaker property that \ig, and there are plenty of examples for groups which are \tig~ but not \ig.
  For instance, it is proved in this paper that the Lie group $\sltr $ is \tig, whereas
  it seems extremely difficult to determine the status of  $\sltr $
  endowed with the discrete topology.

  In this paper we study the problem of topological invariable generation for linear groups and
  for automorphism groups of trees.
       Our main results show that the Lie group $\sltr $ and the group of tree automorphisms
       $\mathrm{Aut}(T)$ are \tig~ (sections 6 and 7),
  and that the groups $PSL_m(K) ,m\geq 2$ are not \ig~ for countable fields
  of infinite  transcendence degree over the prime field (section 8).
  In section 2 we give an alternative description of \tig~  in terms of Wiegold subgroups and
  in terms of fixed-point free actions.
  We clarify the relationship between the notions of Wiegold, Borel and parabolic subgroups
  in sections 3-5.

\section{Preliminary results}

We shall formulate now the invariable generation problem in a topological setup.
 An action $G\curvearrowright X$ of  a  topological group $G$ on a topological
 space $X$ is \textsf{continuous} if the map $(g,x)\mapsto gx$ from
 $G\times X$ to $X$ is continuous. For a subgroup $W\leq G$ and an element $g\in G$, let $W^{g}=g^{-1}Wg$ denote the conjugate of $W$ by $g$.
  A subset $S$ of $G$ is called \textsf{\cc~} (in $G$) if it meets every non-trivial
 conjugacy class of $G$.
 We call a proper closed subgroup $W$  of $G$ a \textsf{Wiegold subgroup} if
   $G$ is covered by conjugates of $W$, i.e. $G=\bigcup _{g\in G}W^{g}$, or equivalently if $W\leq G$ is proper, and $W$ is \cc.

 An action $G\overset{\alpha}{\curvearrowright} X$ of a group $G$ on a set
 $X$ has \textsf{fixed-point property}
 (for short, $\alpha$ is an \textsf{(FP)-action})
  if every $g\in G$ has a fixed point in $X$.
  Alternatively, $\alpha$ is an FP-action \iff $G$ is covered by the point stabilizers,
  i.e. $G=\bigcup_{x\in X} G_x$, where the \textsf{stabilizer } of $x$ is
  $G_x=\{g\in G: gx=x\}$.

\begin{lemma}\label{l-4-equivalences}
The following conditions on a topological group $G$ are equivalent:
 \begin{description}
   \item[I.]
  ~ $G$ contains a Wiegold subgroup.
   \item[II.]
 ~ There is a continuous transitive FP-action $G\curvearrowright X$ on a Hausdorff topological
 space $X$ of cardinality at least 2.
   \item[III.]
  There exists a \cc~ subset $S$ of $G$ that does not generate   $G$
  topologically.
 \end{description}
 \end{lemma}

\begin{proof}
\begin{description}
  \item[I $\Rightarrow$ II.] Suppose that $G$ contains a Wiegold subgroup, say $W$.
  The natural action $G\curvearrowright (G/W)$ is transitive, and since $W$ is proper,
  the cardinality of $G/W$ is at least 2.
  Moreover, every $g\in G$ is contained in some conjugate $ W^{h^{-1}}, h\in G$.
  The inclusion $g\in W^{h^{-1}}$
  is equivalent to the equality $ghW= hW$,  which in turn
  means that $g$  fixes the point $hW$ in the  topological quotient
  $G/W$. Thus the action $G\curvearrowright (G/W)$ satisfies
  the fixed-point property.

  \item[II $\Rightarrow$ I.] Suppose that II holds for the action $G\curvearrowright X$.
  We claim that the stabilizer  $G_x$ is Wiegold for every $x\in X$.
  Fix $x\in X$.
  First, $G_{x}$ is proper since the action is transitive.
  Next,
  the stabilizer $G_{x}$   is a closed subgroup of $G$.
  This is because, as the action is continuous, the orbit map
  $\pi:G \rightarrow X: g \mapsto gx$ is continuous,
  and $G_{x} = \pi^{-1}(\{x\})$, with $\{x\}$ closed.
   By fixed-point property, $G$ is covered by stabilizers,
   i.e. $G=\bigcup_{y\in X} G_y$.
   And by transitivity every $y\in X$ is of the  form $y=gx$
   for some $g\in G$, thus $G$ is covered by stabilizers of the form $G_{gx}=gG_{x}g^{-1}$ for $g\in G$, and so $G_x$ is Wiegold.

  \item[I $\Rightarrow$ III.]
   Let $W$ be a Wiegold subgroup of $G$. Then the subset $W$ of $G$ is \cc, and since $W$ is a proper closed subgroup of $G$, it does not generate $G$
   topologically.

 \item[III $\Rightarrow$ I.]
 Suppose, there exists a \cc~ subset $S$ of $G$ that does not generate   $G$
  topologically.
  The closure $W=\overline{\langle S\rangle}$ is a proper closed subgroup of $G$. Since $S$ was \cc, so is $W$.
\end{description}
 \end{proof}

We say that the topological group $G$ is \textsf{topologically invariably generated}
 (abbreviated to \tig) if the following equivalent conditions (i)-(iii)  are satisfied.
 Note, that these conditions, being the negations of I-III, are all equivalent

 \begin{description}
   \item[(i).]
  ~ $G$ contains no Wiegold subgroup.
   \item[(ii).]
 ~ If $G\curvearrowright X$ is continuous transitive action on a topological
 space $X$ of cardinality at least 2 then $G$ contains a  fixed-point-free element.
   \item[(iii).]
  Every \cc~ subset of $G$ generates   $G$
  topologically.
 \end{description}

 In case $G$, being equipped with the discrete topology, is \tig~ we say that $G$ is \textsf{invariably generated} (abbreviated to \ig~). For discrete groups, conditions I-III were introduced in \cite{wiegold76} and for finite groups condition III was introduced in \cite{dixon}.
   Clearly, if $G$ is \ig~ then $G$ is \tig~ relative to every group topology on $G$.
 The converse is not true -- for instance the free group $F_2$ of rank 2
  is  \tig~ relative to the profinite topology but it is  not \ig~ (see \cite{wiegold77}).

\begin{example}
 \emph{If $G$ is an abelian group then the conjugacy classes are just 1-element subsets of $G$. So
  the \cc~ sets are those containing $G-\{1\}$. Thus every conjugation complete
  set generates $G$. Hence $G$ has no Wiegold subgroup and $G$ is \ig.}
\end{example}

\begin{example}\label{example-single}
 \emph{ Consider the extreme case when $G$ contains precisely two conjugacy classes:
  $\{1\}$ and $G-\{1\}$.
  Let us show that $G$ is \ig~ \iff $G\simeq\z/2$.
  Clearly, $G=\z/2$ is \ig~. Conversely, suppose that $G$ is \ig.
  Every 1-element set $\{g\}\subset (G-\{1\})$ is conjugation complete, hence $G$ is
  generated by $g$ and therefore $G$ is cyclic.
  It is easily verified that the only cyclic group that has precisely two conjugacy classes is $\z/2$. Thus $G=\z/2$.
 It is proved by D. Osin that any countable torsion free group can be embedded into a finitely generated group with exactly two
conjugacy classes (see \cite{osin}). }
\end{example}
 These examples suggest, that intuitively
 the group $G$ is \ig~ if it contains ``many'' conjugacy  classes.

 \begin{lemma}[\cite{wiegold76, wiegold77}]
 \begin{description}
   \item[i]  The class \ig~ is closed under extensions.
   \item[ii] The class \ig~ is closed under restricted direct products with arbitrarily many factors.
   \item[iii] The class \ig~ is not closed under unrestricted direct products and not closed
  under passage to subgroups.
   \item[iv] The class \ig~ contains all finite groups and all solvable groups.
\end{description}
 \end{lemma}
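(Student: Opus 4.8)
The unifying strategy is to verify characterization \textbf{III} of Lemma \ref{l-4-equivalences} in the discrete setting, namely that every \cc subset generates $G$ (equivalently, via \textbf{II}, that $G$ has no Wiegold subgroup). I treat the four assertions separately, and the extension statement \textbf{i} carries the real content. For \textbf{i}, suppose $1\to N\to G\to Q\to 1$ is exact with $N$ and $Q$ both \ig, let $S\subseteq G$ be \cc and put $H=\langle S\rangle$; the goal is $H=G$. First I would push $S$ to the quotient: for nontrivial $\bar g\in Q$ lift it to $g\in G$, choose $s=g^{x}\in S$ (possible as $S$ is \cc), and note $\bar s=\bar g^{\bar x}\in\bar S$, so $\bar S$ is \cc in $Q$; since $Q$ is \ig this gives $\langle\bar S\rangle=Q$, i.e. $HN=G$. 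The substantive step is to prove $H\cap N$ is \cc in $N$. Fix $1\neq k\in N$, pick $s\in S$ with $s=k^{g}$, and write $g=hn_0$ with $h\in H$, $n_0\in N$ using $HN=G$. As $k\in N\trianglelefteq G$ we get $s\in N$, hence $s\in H\cap N$; moreover $H\cap N$ is invariant under conjugation by $H$ (since $H$ normalizes both $H$ and $N$), so conjugating $s$ by $h$ yields
\[
 hsh^{-1}=(hn_0^{-1}h^{-1})\,k\,(hn_0h^{-1})=k^{m},\qquad m=hn_0h^{-1}\in N,
\]
an element of $H\cap N$ that is $N$-conjugate to $k$. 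Thus $H\cap N$ meets every nontrivial $N$-class, so \ig of $N$ forces $\langle H\cap N\rangle=N$, i.e. $N\leq H$; with $HN=G$ this gives $H=G$.

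For \textbf{ii}, let $G$ be the restricted direct product of \ig groups $G_i$ and let $S$ be \cc in $G$. Each coordinate subgroup $G_i\leq G$ is normal, and conjugation acts coordinatewise, so for $g\in G_i$ its $G$-class coincides with its $G_i$-class; hence $S\cap G_i$ is \cc in $G_i$ and therefore generates $G_i$. Since in a \emph{restricted} product every element is a finite product of elements supported in single coordinates, the $G_i$ generate $G$, whence $\langle S\rangle\supseteq\langle\bigcup_i G_i\rangle=G$. For \textbf{iv}, a finite $G$ has no Wiegold subgroup by the counting argument reproduced in the Introduction, so it is \ig by \textbf{II}; a solvable $G$ is handled by induction on derived length, the base case (abelian) being the first Example and the inductive step (extension of the solvable group $G/G^{(n-1)}$ of shorter length by the abelian $G^{(n-1)}$) being exactly statement \textbf{i}.

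The negative assertions in \textbf{iii} require explicit counterexamples rather than a closure argument, and this is where the main obstacle lies. The contrast with \textbf{ii} is precisely that in an \emph{unrestricted} product $\prod_i G_i$ the coordinate subgroups no longer generate the group; the plan is to exhibit non-abelian \ig factors $G_i$ together with a proper Wiegold subgroup $W<\prod_i G_i$, the crucial point being that the "tail" of a sequence can be trapped inside conjugates of $W$ even though no single coordinate can be. For non-closure under subgroups one exhibits an \ig group containing a non-\ig subgroup. I would reproduce Wiegold's constructions for both; the delicate verifications — that the proposed $W$ is genuinely covered by its conjugates, and that the ambient group in the subgroup example is \ig while the chosen subgroup is not — are the real difficulty, since neither property is stable under the operations used in \textbf{i} and \textbf{ii}.
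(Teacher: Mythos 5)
Your arguments for parts \textbf{i}, \textbf{ii} and \textbf{iv} are correct, and there is nothing in the paper to compare them against: the lemma is stated with a bare citation to Wiegold's papers and no proof is given. Your proof of \textbf{i} (push $S$ to the quotient to get $HN=G$, then show $H\cap N$ is conjugation complete in $N$ by replacing the conjugator $g=hn_0$ with its $N$-part via a further conjugation by $h\in H$) is exactly Wiegold's original argument, and the computation $hsh^{-1}=k^{m}$ with $m=hn_0h^{-1}\in N$ checks out. The coordinatewise-conjugation argument for \textbf{ii} and the derived-length induction for \textbf{iv} are likewise correct and complete.

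Part \textbf{iii} is a genuine gap: the two negative assertions are existence statements, and you supply no witness for either, only a description of what a witness would have to look like. "Reproduce Wiegold's constructions" is not a proof. For non-closure under subgroups the intended witness is the one the paper itself alludes to in the surrounding text: Wiegold (1977) exhibits an \ig~ group whose commutator subgroup is not \ig~ (relatedly, free groups of rank at least $2$ are not \ig~, which is also quoted in this paper). For non-closure under unrestricted direct products one again needs an explicit unrestricted product of \ig~ groups together with a proper subgroup covered by its conjugates (or a non-\ig~ quotient, since the class is closed under quotients), and nothing in your closure arguments for \textbf{i} and \textbf{ii} produces one. As written, \textbf{iii} rests entirely on the citation, which is the same position the paper is in, but it means your proposal does not actually prove the full statement.
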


In \cite{wiegold77} an \ig~-group is  presented, whose commutator subgroup  is outside \ig~.
 In the present paper it is shown that $\sltr$ is \tig, whereas it contains a
 discrete free group $F_2$, which is not \ig.

 Slightly modifying Wiegold's arguments  we obtain the following properties of
 the class \tig.

 \begin{lemma}\label{profinite-solvable}
 The class \tig~ contains all profinite groups and all topological solvable groups.
 \end{lemma}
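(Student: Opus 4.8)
The plan is to prove both assertions through condition \textbf{II} of Lemma~\ref{l-4-equivalences}: for a group $G$ in either family I would show that no proper closed subgroup $W<G$ can be a Wiegold subgroup, i.e. that $\bigcup_{g\in G}W^{g}\neq G$. One might instead try to imitate Wiegold's discrete proof via condition \textbf{III}, but in the topological setting that route immediately meets the difficulty that a \cc~ set only generates the relevant quotient \emph{densely}, so the equality one wants is replaced by mere density; passing through condition \textbf{II} avoids this.

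For a profinite group $G$ the key observation is that a proper closed subgroup is contained in a proper \emph{open} one. Indeed, writing the identity basis as open normal subgroups $\{N_i\}$, one has $W=\bigcap_i WN_i$ with each $WN_i$ open; properness of $W$ forces some $U:=WN_i$ to be a proper open, hence finite-index, subgroup with $W\leq U<G$. Passing to the normal core $K=\bigcap_{g}U^{g}$, which is open and normal, the quotient $G/K$ is finite and $U/K$ is a proper subgroup; since $K\leq U^{g}$ for all $g$, the set $\bigcup_{g}U^{g}$ is the full preimage of $\bigcup_{\bar g}(U/K)^{\bar g}$ under $G\to G/K$. The counting argument recalled for Jordan's theorem in the introduction gives $\bigcup_{\bar g}(U/K)^{\bar g}\subsetneq G/K$, whence $\bigcup_{g}W^{g}\subseteq\bigcup_{g}U^{g}\subsetneq G$. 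So $W$ is not Wiegold and $G$ is \tig~.

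For a topologically solvable group I would induct on the length of the closed derived series, the base case being abelian groups, for which conjugation is trivial and hence $\bigcup_{g}W^{g}=W\neq G$ for every proper closed $W$. For the inductive step put $N=\overline{[G,G]}$, a closed normal subgroup with $G/N$ abelian and of strictly shorter derived length, so $N$ is \tig~ by induction. Suppose $W<G$ were a proper closed Wiegold subgroup. Projecting to $G/N$, where conjugation acts trivially, the image of $\bigcup_{g}W^{g}=G$ equals the image of $W$, forcing $WN=G$ \emph{as sets}; this is precisely where the abelian quotient restores the exactness that condition \textbf{III} could not supply. Then $W\cap N$ is a proper closed subgroup of $N$ (equality would give $W\supseteq N$ and hence $W=WN=G$), and for $n\in N$, writing $n\in W^{g}$ with $g=w\nu$, $w\in W$, $\nu\in N$, one gets $n\in W^{w\nu}\cap N=(W\cap N)^{\nu}$, so $N=\bigcup_{\nu\in N}(W\cap N)^{\nu}$. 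Thus $W\cap N$ is a closed Wiegold subgroup of $N$, contradicting that $N$ is \tig~; hence $G$ has no closed Wiegold subgroup and is \tig~.

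The step I expect to be the main obstacle is exactly this passage in the solvable argument: producing an honest $N$-conjugate of a given element inside $W\cap N$. The naive transport of the \cc~ condition from $G$ to $N$ fails because conjugation by $g\in G$ is only an (often outer) automorphism of $N$, and in the topological category one cannot assume the factorization is surjective on the nose. The device that overcomes this is twofold: first use the abelian quotient $G/N$ to force the exact factorization $WN=G$, and then absorb the $W$-part of $g=w\nu$ via $W^{w}=W$, so that only the genuinely $N$-inner factor $\nu$ survives. For the profinite case no such difficulty arises; there the sole point to justify is the containment of $W$ in a proper open subgroup, which is exactly where profiniteness enters.
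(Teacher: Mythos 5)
Your proof is correct, and it is precisely the ``slight modification of Wiegold's arguments'' that the paper alludes to without writing out: the paper gives no proof of this lemma, and your two ingredients (reduction of a proper closed subgroup of a profinite group to a proper open one followed by the Jordan counting argument in the finite quotient, and the extension/derived-series induction with the absorption $W^{w\nu}=W^{\nu}$ forced by the abelian quotient) are exactly the intended adaptation of Wiegold's discrete proof to condition \textbf{II}. No gaps.
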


\bigskip

 Not so many classes of groups are known to be \tig :

\begin{enumerate}

\item Virtually solvable groups (including finite groups) are \tig~ (See \cite{wiegold76}).

\item
 The pro-finite completions of all arithmetic groups with the Congruence Subgroup Property
 are \tig~  (See \cite{KLS2}).
\end{enumerate}

This list is extended in the present paper by the group $\sltr$ and the group of automorphisms of a regular tree, which are shown to be in \tig.

\bigskip

 The list of known non-\ig~ groups is lengthy but rather chaotic:

\begin{enumerate}
\item Groups with one single non-trivial conjugacy class, except $\z/2$.

\item
The finite index net subgroups in  groups $\mathrm{SO}(Q,\z)$ where $Q$ is a rational quadratic
form of signature $(2, 2)$  (see \cite{gelander-congruence}).

 \item
 All non-abelian connected compact Lie groups
 (see \cite{gelander-convergence}).


\item $\mathrm{SL}_{n}(\mathbb{R})$ for $n>2$  (see \cite{KLS2}).

\item Any non-virtually-solvable linear algebraic group over an
algebraically closed field, for instance
 $\mathrm{SL}_{n}(\mathbb{C})$ for $n\geq 2$ (see \cite{KLS2}).

\item Non-elementary convergence groups (including Gromov hyperbolic groups)
           \cite{gelander-convergence}.

\item The Thompson groups $T$ and $V$ ( see \cite{gelander-thompson}).
%

\end{enumerate}

This list is extended in the present paper by the groups $PSL_m(K),m\geq 2$ where $K$ is a countable field of infinite transcendence degree over a prime field.


\section{Parabolics and stabilizers as Wiegold subgroups}
Let $\glnK$ be a general linear group of degree $n\geq 2$ over the
 (topological) field $K$.
 Consider the standard left action $\glnK\curvearrowright K^n$ of $\glnK$ on $K^n$.
 Let  $\gr(n,r)$ denote the  \textsf{Grassmannian} of all  $r$-dimensional linear subspaces of
 $K^n$.
  In the next proposition we give various conditions on the field and the action of a group $G$ on
 the Grassmannian that provide that certain stabilizers are Wiegold subgroups.

\begin{proposition}\label{p-stabilizer-is-wiegold}
  Let $G$   be a  subgroup of $\glnK$ and let $X\subseteq \gr(n,r),~n\geq r>1 $
  be a $G$-invariant subset with more than one element.
   \begin{enumerate}

     \item Suppose that   the action $G \curvearrowright X$ is  transitive and satisfies
   the fixed-point property FP.
    Then every setwise stabilizer $G_P ~(P\in X)$ is a Wiegold subgroup of $G$.
%

    \item In case $K$ is \ac we can drop the fixed-point assumption at the expense of assuming that
 $X=\gr(n,r)$.
 Namely, let $K$ be an \ac field and suppose a subgroup $G\leq\glnK$  acts transitively
 on  $\gr(n,r)$. Then for every $r$-dimensional linear subspace
 $L$ of $K^n$ the (setwise) stabilizer $G_L$ is Wiegold.
 In particular $\glnK$ and $\slnK$ are not \ig.

\item
 Suppose  a subgroup $G\leq\glnr$ acts transitively on the Grassmannian of
 planes $\gr(n,2) ~(n\geq 3)$. Then for every plane  $L$ of $\r^n$ the stabilizer $G_L$ is Wiegold.
 In particular $\glnr$, $\slnr$, $\onr$, $\sonr$ are not \ig.
   \end{enumerate}

\end{proposition}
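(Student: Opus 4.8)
The plan is to deduce this from the first part of the Proposition. Since $G$ acts transitively on $X=\gr(n,2)$ and $X$ has more than one element (indeed infinitely many, as $n\geq 3$), it suffices by that first part to verify the fixed-point property FP for the action $G\curvearrowright\gr(n,2)$; the stabilizer $G_L$ of any plane $L$ is then automatically a Wiegold subgroup. Concretely, FP here asserts that every $g\in G$ leaves invariant some $2$-dimensional subspace of $\r^n$. Since $G\leq\glnr$, this reduces to the purely linear-algebraic claim that \emph{every} $g\in\glnr$ with $n\geq 2$ admits a $2$-dimensional $g$-invariant subspace.

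To prove that claim I would complexify. Let $g_{\c}$ act on $\c^n$ and pick an eigenvalue $\lambda\in\c$ with eigenvector $z=x+iy$, where $x,y\in\r^n$. If $\lambda\notin\r$, write $\lambda=a+bi$ with $b\neq 0$; taking real and imaginary parts of $g z=\lambda z$ gives $gx=ax-by$ and $gy=bx+ay$, so $\langle x,y\rangle$ is $g$-invariant. Moreover $x,y$ are linearly independent over $\r$, for otherwise $z$ would be a complex scalar multiple of a real vector and $\lambda$ would be forced to be real; hence $\langle x,y\rangle$ is a genuine invariant $2$-plane. If instead all eigenvalues of $g$ are real, fix a real eigenvalue $\lambda$ with eigenvector $v$; using $n\geq 2$ one completes $v$ to a $2$-dimensional invariant subspace — either by a second linearly independent eigenvector (when some eigenspace has dimension $\geq 2$, or a second real eigenvalue occurs), or, in the remaining case of a single Jordan block, by a generalized eigenvector $w$ with $gw=\lambda w+v$, so that $\langle v,w\rangle$ is invariant. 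This establishes FP, and the first part of the Proposition then yields that $G_L$ is a Wiegold subgroup for every plane $L$.

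For the concluding assertion it remains to check that each of $\glnr,\slnr,\onr,\sonr$ (with $n\geq 3$) acts transitively on $\gr(n,2)$: any $2$-plane can be carried to any other by a change of basis, and for the orthogonal and special-orthogonal groups one extends orthonormal bases of the two planes to orthonormal bases of $\r^n$, adjusting a sign and using $n\geq 3$ to land in $\sonr$. Each such group therefore possesses a Wiegold subgroup $G_L$, which by the characterization of \ig~ through the absence of Wiegold subgroups (Lemma \ref{l-4-equivalences}, condition II) means the group is not \ig~.

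I expect the only real content to be the linear-algebra claim, and within it the all-real-eigenvalue case, where one must argue uniformly across the possible Jordan structures; the complex-eigenvalue case and the transitivity checks are routine. The single point to keep honest is the hypothesis $n\geq 2$: it is exactly what guarantees a second dimension in the all-real case (and for $n\geq 3$ one has abundant room to spare).
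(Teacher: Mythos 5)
Your proposal is correct and follows essentially the same route as the paper: establish the fixed-point property for the action on $\gr(n,2)$ via complexification (an invariant plane from the real and imaginary parts of an eigenvector for a nonreal eigenvalue, plus the all-real-eigenvalue case) and then invoke part~(1) together with condition II of Lemma \ref{l-4-equivalences}. The paper merely cites the invariant-plane fact and leaves the Jordan-block analysis and the transitivity checks for $\glnr,\slnr,\onr,\sonr$ implicit; you supply both correctly.
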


\begin{proof}
 The first assertion is just a direct consequence of lemma \ref{l-4-equivalences} in terms of action
 of $G$ on Grassmannian. The second assertion follows from the first since
 every
  $g\in \glnK$ fixes  some $r$-dimensional linear subspace $L$ of $K^n$.
 (This is true for $r=1$ since $K$ is \ac and for arbitrary $r$ an easy induction
 does the job).

 As far as the third assertion is concerned, it is well known, via complexification,
 that every $g\in \glnr$ fixes a subspace of dimension 1 or 2 (see \cite[p.~279]{axler}, \cite[Prop.12.16]{kostrikin-manin}).
 Moreover, to each nonreal eigenvalue corresponds an invariant plane.
 It follows that if $n\geq 2$ then every $g\in \glnr  $ fixes
 a subspace of dimension 2.
  Then for every plane  $L$ of $\r^n$ the stabilizer $G_L$ is Wiegold.
 Thus $\glnr$ is not \ig~  and moreover, every subgroup $G\leq\glnr$ which acts transitively
 on $\gr(n,2)$ turns out to be non-\ig.
  \end{proof}

\textbf{Symplectic groups.}
 The \textsf{symplectic group} $Sp_{2n}(k)$ is  defined as the set of all linear
transformations of a   $2n$-dimensional vector space $k^{2n}$ over the field $k$ that preserve a
non-degenerate, skew-symmetric, $k$-bilinear form $\omega$ on $k^{2n}$.
    A subspace $L$ of a symplectic vector space is called \textsf{Lagrangian} if $L=L^\perp$,
    where $L^\perp$ is an orthogonal complement of $L$ relative to $\omega.$
    A subspace $L$ is Lagrangian \iff $\dim L=n$ and $L$ is \textsf{isotropic}, i.e. $\omega| L$
    is identically zero.
    The set of Lagrangian subspaces of symplectic space
    $(k^{2n},\omega)$ is called the \textsf{Lagrangian Grassmannian}
     and is denoted by $\mathcal{L}(k^{2n})$.
  The group $Sp_{2n}(k)$ acts transitively on $\mathcal{L}(k^{2n})$.
  Indeed  by Witt's theorem (1937)
  every linear isomorphism $f:L\rightarrow L'$ of Lagrangian subspaces
   can be extended to a symplectic automorphism of $k^{2n}$.
   Moreover, the following is true:
   \begin{theorem}[\cite{duistermaat}, Theorem 3.4.3]
    Let $(E, \omega)$ be a symplectic vector space over a field $k$ and let
    $A:(E, \omega) \rightarrow (E, \omega)$
   be a symplectic mapping   such that all its eigenvalues are
   contained in $k$.  Then there exists an $A-$invariant Lagrangian subspace $L$ of $(E, \omega)$.
      Moreover, if $k$ is \ac then every symplectic automorphism of $(E, \omega)$ fixes  (setwise)
      some    Lagrangian subspace.
   \end{theorem}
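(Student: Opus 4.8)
The plan is to prove the first assertion by induction on $\dim_k E = 2n$, with the existence of an eigenvector serving as the engine of the induction: the hypothesis that all eigenvalues of $f$ lie in $k$ guarantees a nonzero $v\in E$ and $\lambda\in k$ with $f(v)=\lambda v$. The base case $n=0$ is trivial, and $n=1$ is immediate because $\langle v\rangle$ is itself the desired invariant Lagrangian.

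For the inductive step I would perform a \emph{symplectic reduction} along the isotropic line $\langle v\rangle$. First note $\langle v\rangle$ is automatically isotropic, since $\omega$ is alternating. Next, $v^{\perp}=\{x\in E:\omega(x,v)=0\}$ is a hyperplane that is $f$-invariant: from $\omega(f x,f v)=\omega(x,v)$ and $f v=\lambda v$ with $\lambda\neq 0$ one gets $\omega(fx,v)=0$ whenever $\omega(x,v)=0$. Because $v$ is isotropic we have $\langle v\rangle\subseteq v^{\perp}$, and the radical of $\omega|_{v^{\perp}}$ equals $v^{\perp}\cap(v^{\perp})^{\perp}=\langle v\rangle$; hence $\omega$ descends to a \emph{nondegenerate} alternating form $\bar\omega$ on the quotient $W=v^{\perp}/\langle v\rangle$, which is therefore a symplectic space of dimension $2n-2$. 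The map $f$ descends to a symplectic automorphism $\bar f$ of $(W,\bar\omega)$, and since the characteristic polynomial of a subquotient divides that of $f$, all eigenvalues of $\bar f$ still lie in $k$.

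Now I apply the induction hypothesis to $\bar f$ to obtain a $\bar f$-invariant Lagrangian $\bar L\subseteq W$ with $\dim\bar L=n-1$, and pull it back: let $L\subseteq v^{\perp}$ be the preimage of $\bar L$ under $v^{\perp}\to W$, so that $\langle v\rangle\subseteq L$ and $\dim L=n$. Invariance of $\bar L$ under $\bar f$ together with $f(v)=\lambda v$ gives $f(L)=L$, and isotropy of $\bar L$ gives isotropy of $L$, since for $x,y\in v^{\perp}$ the value $\omega(x,y)$ depends only on the classes $\bar x,\bar y$, whence $\bar\omega(\bar x,\bar y)=0$ forces $\omega(x,y)=0$. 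Thus $L$ is an $f$-invariant isotropic subspace of dimension $n$, i.e. an $f$-invariant Lagrangian, completing the induction. The ``moreover'' clause is then immediate: over an \ac field every endomorphism has all its eigenvalues in $k$, so the first assertion applies verbatim to an arbitrary symplectic automorphism.

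I expect the only genuinely delicate point to be the justification of the reduction step, namely checking that the radical of $\omega|_{v^{\perp}}$ is exactly $\langle v\rangle$ (equivalently that $(v^{\perp})^{\perp}=\langle v\rangle$, using nondegeneracy of $\omega$), which is what makes $\bar\omega$ nondegenerate and keeps the induction inside the class of symplectic spaces. Everything else amounts to bookkeeping of dimensions and invariance, and the passage of the eigenvalue hypothesis to the subquotient $\bar f$.
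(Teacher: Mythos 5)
Your argument is correct. Note that the paper itself gives no proof of this statement: it is quoted as Theorem 3.4.3 of Duistermaat's book, so there is nothing internal to compare against. Your induction via symplectic reduction along the eigenline $\langle v\rangle$ --- checking that $v^{\perp}$ is $f$-invariant, that the radical of $\omega|_{v^{\perp}}$ is exactly $\langle v\rangle$ so that $W=v^{\perp}/\langle v\rangle$ is again symplectic, that the eigenvalues of the induced map still lie in $k$ because its characteristic polynomial divides that of $f$, and that the preimage of an invariant Lagrangian in $W$ is an invariant Lagrangian in $E$ --- is the standard proof of this fact and is essentially the argument in the cited source; the ``moreover'' clause follows as you say since over an algebraically closed field the eigenvalue hypothesis is automatic.
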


  It follows that $Sp_{2n}(k)_L$ is Wiegold for every Lagrangian subspace $L$.

  \textbf{Wiegold subgroups over real closed fields.} These fields have the same
  first-order properties as $\r$.
 A field $K$ is said to be \textsf{real} if  $-1$ is not a sum of squares in $K$
 (see \cite[Corollary 9.3, Chapter VI]{lang}).
 A field $K$ said to be \textsf{real closed} if it is real, and if any algebraic extension of
 $K$ which is real must be equal to $K$.
 By a \textsf{real closure} we shall mean a real closed field $L$ which is algebraic over $K$.
 Every  real field admits  a real closure.
 If $K$ is real closed then every polynomial of odd degree in $K[X]$ has a root in $K$ and
$\overK:=K(\sqrt{-1})$ is the algebraic closure of $K$.


\begin{proposition}\label{l-real-closed}
 Let $K$ be a real closed field.
 Suppose  a subgroup $G\leq\glnK$ acts transitively on Grassmannian of
 planes $\gr(n,2)~(n\geq 3)$.
 Then for every plane  $L$ of $K^n$ the stabilizer $G_L$ is Wiegold.
 In particular $\glnK,\slnK,\onK,\sonK$ are not \ig.
  \end{proposition}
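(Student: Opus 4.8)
The plan is to deduce this exactly as part (3) of Proposition \ref{p-stabilizer-is-wiegold} was deduced in the real case, the only change being the justification of the fixed-point property over $K$. So I would first reduce the statement to the single linear-algebra assertion that every $g \in \glnK$ fixes some $2$-dimensional subspace of $K^n$. Granting this, the given transitive action $G \curvearrowright X = \gr(n,2)$ satisfies property FP, since each $g \in G \leq \glnK$ fixes a plane and every plane lies in $X$; then part (1) of Proposition \ref{p-stabilizer-is-wiegold} applies and yields that every stabilizer $G_L$ is Wiegold.

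The heart of the matter is therefore to show that a matrix over a real closed field always has an invariant plane. Over $\r$ this was done by complexification; over $K$ I would instead invoke the fact recorded in the excerpt that $\overK = K(\sqrt{-1})$ is the algebraic closure of $K$ and is of degree $2$ over $K$. Consequently every irreducible polynomial in $K[t]$ has degree $1$ or $2$, so the characteristic polynomial of $g$ factors over $K$ into linear and irreducible quadratic factors. Regarding $K^n$ as a $K[t]$-module via $g$ and applying the structure theorem, $K^n$ decomposes into cyclic modules $K[t]/(p^k)$ with each $p$ irreducible of degree $1$ or $2$.

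I would then argue by cases. If some irreducible factor $p$ has degree $2$, the corresponding cyclic summand $K[t]/(p^k)$ contains a $g$-invariant submodule isomorphic to $K[t]/(p)$, of $K$-dimension $2$, giving the desired plane. Otherwise all eigenvalues lie in $K$, and since $n \geq 2$ I can produce an invariant plane from eigenvectors exactly as over $\r$: two independent eigenvectors when there are two distinct eigenvalues or a two-dimensional eigenspace, and $\ker(g-\lambda)^2$ (which has dimension $2$) when $g-\lambda$ is a single nilpotent Jordan block. Alternatively, since all real closed fields are elementarily equivalent, the first-order statement ``every element of $\glnK$ fixes a plane'' transfers from $\r$ to $K$ by Tarski's principle and gives the fact at once.

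For the ``in particular'' clause I would note that $\glnK$ and $\slnK$ act transitively on $\gr(n,2)$ trivially, while for $\onK$ and $\sonK$ transitivity follows from Witt's theorem together with the fact that over a real closed field the standard form $\sum x_i^2$ is anisotropic, so that every $2$-dimensional subspace is nondegenerate and any two such are isometric; each of these groups then satisfies the hypothesis, so its plane stabilizers are Wiegold and the group is non-\ig~. The step I expect to require the most care is not the fixed-point property, which is a near-verbatim analogue of the real case once $\overK = K(\sqrt{-1})$ is in hand, but this transitivity of the orthogonal groups, since it rests on the theory of quadratic forms over real closed fields rather than on a bare factorization of the characteristic polynomial.
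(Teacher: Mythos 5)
Your proposal is correct and follows the same overall skeleton as the paper: reduce everything to the single assertion that every $g\in\glnK$ fixes a plane, deduce the fixed-point property for the action on $\gr(n,2)$, and invoke part (1) of Proposition \ref{p-stabilizer-is-wiegold}. Where you diverge is in the proof of the invariant-plane lemma itself. The paper argues concretely: it passes to $\overK=K(\sqrt{-1})$, takes an eigenvector $w=u+\sqrt{-1}v$ for a non-real eigenvalue, computes $gw$ to see that $Ku+Kv$ is invariant, and then uses the reality of $K$ (via the observation that $u=cv$ would force $c^2=-1$) to show this span is genuinely $2$-dimensional. Your route through the structure theorem for $K[t]$-modules, exploiting that $[\overK:K]=2$ forces every irreducible polynomial over $K$ to have degree at most $2$, reaches the same conclusion more structurally and sidesteps the linear-independence check entirely, since the socle $K[t]/(p)$ of a cyclic summand is automatically $2$-dimensional when $\deg p=2$; your Tarski-transfer alternative is slicker still and is the one argument here that genuinely uses "real closed" as a model-theoretic rather than algebraic hypothesis. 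You also supply something the paper omits: the justification, via anisotropy of $\sum x_i^2$ and Witt's extension theorem, that $\onK$ and $\sonK$ really do act transitively on $\gr(n,2)$, which is needed for the "in particular" clause and is not addressed in the paper's proof. All of this is sound.
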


\begin{proof}
  We assert that every $g\in\glnK) ~ (n\geq 2)$ has invariant plane in the column space $(K(\sqrt{-1}))^n$.
 If all eigenvalues of $g$ are real, i.e. belong to $K$, then $g$ is triangulizable over $K$
 and so has an invariant plane. Now suppose that there is a non-real eigenvalue
 $a+b\sqrt{-1}\in\overK, (a,b\in K) b\neq 0.$
 Let $w=u+\sqrt{-1}v \in\overK^n $ be an eigenvector  corresponding to $g$.
 Then
 \begin{equation}\label{eq-gw}
 gw=gu+\sqrt{-1}gv=(a+b\sqrt{-1})(u+\sqrt{-1}v)=(au-bv)+\sqrt{-1}(av+bu)
 \end{equation}
 from which it follows that the space $Ku+Kv$ is $g$-invariant.
 In fact this space is 2-dimensional.
 Indeed, suppose $u=cv, 0\neq c\in K$, then substituting $u$ to equation \ref{eq-gw}
 we obtain that $c^2=-1$ contradicting to assumption that $K$ is real.
 To complete the proof we apply now proposition \ref{p-stabilizer-is-wiegold} and obtain that the
 action $G\curvearrowright\gr(n,2)$ satisfies $FP$ and is transitive by assumption
 so $G_L$ is Wiegold.
\end{proof}

 In attempt to generalize proposition 9  to the subspaces of arbitrary dimension $r$
 we came to

 \begin{lemma}\label{l-r-invariant-from-fields}
Let $K$ be an \ac field of degree $d\geq 2$ over its subfield $k$ and suppose that
 the extension $K|k$ is separable.
 Then every $g\in \glnk$ fixes a linear subspace in $k^n$ of dimension smaller than or equal   $d$.
  \end{lemma}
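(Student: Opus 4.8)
The plan is to extend the action of $g$ from $k^n$ to the larger column space $K^n$, to extract an eigenvector there using that $K$ is algebraically closed, and then to descend back to $k$ by expanding that eigenvector in a fixed $k$-basis of $K$; this is exactly the strategy already used in the proof of Proposition~\ref{l-real-closed} in the special case $d=2$, $K=k(\sqrt{-1})$, where $u$ and $v$ are the real and imaginary parts of the eigenvector. First I would regard $g\in\glnk$ as an element of $\glnK$ acting on $K^n$. Since $K$ is \ac, the characteristic polynomial of $g$, whose coefficients lie in $k\subseteq K$, splits over $K$, so $g$ possesses an eigenvector $0\neq w\in K^n$ with eigenvalue $\lambda\in K$, i.e. $gw=\lambda w$.

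Next I would fix a $k$-basis $e_1,\dots,e_d$ of $K$ and write $w=\sum_{i=1}^d e_i u_i$ with uniquely determined $u_i\in k^n$, obtained by decomposing each coordinate of $w$ in this basis. Because the entries of $g$ lie in $k$ and $K$ is commutative, $g$ commutes with scalar multiplication by each $e_i$, so $gw=\sum_i e_i(gu_i)$ with each $gu_i\in k^n$. On the other hand, multiplication by $\lambda$ is a $k$-linear endomorphism of $K$, hence $\lambda e_i=\sum_j a_{ij}e_j$ for suitable $a_{ij}\in k$, and therefore $\lambda w=\sum_j e_j\bigl(\sum_i a_{ij}u_i\bigr)$. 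Comparing the two expansions and invoking the uniqueness of coordinates relative to the basis $e_1,\dots,e_d$ yields the system $gu_j=\sum_i a_{ij}u_i$ for every $j$.

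These relations say precisely that the $k$-subspace $V=\mathrm{span}_k\{u_1,\dots,u_d\}\subseteq k^n$ is $g$-invariant. Since $w\neq 0$, at least one $u_i$ is nonzero, so $V\neq 0$, and $\dim_k V\leq d$ by construction; thus $V$ is a $g$-invariant subspace of $k^n$ of dimension between $1$ and $d$, as required. I do not expect a genuine obstacle here, the whole argument being linear algebra once the eigenvector over $K$ has been produced; in particular, separability of $K\mid k$ is not really used in this descent, any $k$-basis of $K$ serving equally well. The only point requiring a little care is the bookkeeping in passing from the single relation $gw=\lambda w$ to the system of invariance relations, which is governed entirely by the uniqueness of the basis expansion. (I would also remark that, by the Artin–Schreier theorem, an \ac field of finite degree $d\geq 2$ over a subfield forces $d=2$ with $k$ real closed, so the content of the lemma in fact coincides with Proposition~\ref{l-real-closed}, though the descent above is written so as to work verbatim for any value of $d$.)
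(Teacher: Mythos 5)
Your proof is correct and follows essentially the same route as the paper's: extend scalars from $k^n$ to $K^n$, extract an eigenvector $w$ with eigenvalue $\lambda$ using that $K$ is algebraically closed, and expand $w$ in a $k$-basis of $K$ to exhibit a nonzero $g$-invariant span of at most $d$ vectors in $k^n$. The only difference is that the paper works with the power basis $1,\zeta,\dots,\zeta^{d-1}$ of a primitive element (which is where separability enters, via the Primitive Element Theorem), whereas you use an arbitrary $k$-basis together with the matrix of the $k$-linear map ``multiplication by $\lambda$''; your observation that separability is therefore superfluous for the descent is accurate.
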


 \begin{proof}
 By Primitive Element Theorem there exists a \textsf{primitive} element $\zeta\in K$, i.e.
 $K=k(\zeta)$.
 It follows from finiteness assumption that $\zeta$ is algebraic over $k$.
 Then $K=k(\zeta) = k[\zeta]$, and $k(\zeta)$ is finite over $k$
 \cite[Proposition 1.4., Chapter V]{lang}.
 Moreover, the degree $d=[k(\zeta) : k]$ is equal to the degree of the irreducible
 polynomial $p(X)$ of $\zeta$ over $k$
 (loc.cit).
 The proof of \cite[Proposition 1.4., Chapter V]{lang} shows that
 the powers $1,\zeta,\ldots,\zeta^{d-1}$ form a basis for the space $K$ over $k$.
 Let $V=K\otimes_k k^n$ be an extension of scalars from $k$ to $K$. This is an
 $n$-dimensional space over $K$.
 The action of $g\in \glnk$ on $k^n$ extends naturally to an action on $K\otimes_k k^n$.
 Since $K$ is \ac there is an eigenvector $v\in V$ with eigenvalue $\lambda\in K$.
 Write uniquely $\lambda=\sum_i l_i \zeta^i, v=\sum_j \zeta^j\otimes v_j$, where
 $l_i\in k, v_i\in k^n$.
 We assert that $g$ fixes the space $\sum_i kv_i$ (whose dimension is clearly at most $r$).
 Since $gv=\lambda v$ we have
 $$
 gv =\sum_j \zeta^j\otimes gv_j=\lambda v=
   (\sum_i l_i\zeta^i)(\sum_j\zeta^j\otimes v_j)=
 \sum_{i,j} l_i\zeta^{i+j}\otimes v_j.
 $$
 Rewriting $\zeta^{i+j}$ as a $k$-linear combinations of $1,\zeta,\ldots,\zeta^{r-1}$
 we obtain that
 $$\sum_{i,j} l_i\zeta^{i+j}\otimes v_j=\sum_k \zeta^k \otimes v_k'$$
 where $v_k'$ are $k$-linear combinations of $v_j$.
 It follows from above that $gv_j=v_j'$, and the lemma follows.
 \end{proof}

 Unfortunately, the applicability of the above lemma is very much restricted by
 the Artin-Schreier theorem which asserts that if a field
 $K$ is algebraically closed with $k$ a subfield such that  $1 < [K: k] < \infty$
 then $K = k(i)$ where $i^2 = -1$, and $k$ has characteristic 0 \cite[Corollary 9.3, Chapter VI]{lang}.

\section{Borel versus Wiegold}

 The following is a slight variation on the result in \cite{KLS2}.

\begin{theorem}
Let $G$ be a linear algebraic group over an algebraically closed field $k$
and let $B$ be its Borel
subgroup (=maximal connected closed solvable subgroup).
 1) If $G$ is not virtually solvable then the normalizer $N_G(B)$ is a  Wiegold subgroup of $G$.
 2) If furthermore $G$ is connected then $B$ is itself Wiegold.

\end{theorem}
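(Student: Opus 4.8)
The plan is to reduce everything to the statement that every element of $G$ fixes a Borel subgroup up to conjugacy, and then invoke the equivalence of the Wiegold property with a fixed-point property for the action on the flag variety $G/B$ (the $G$-conjugacy class of Borels). Recall that $G/N_G(B)$ is precisely the variety of Borel subgroups of $G$, since Borels are self-normalizing up to the normalizer and $G$ acts on them by conjugation with a single orbit (all Borels are conjugate). So proving that $N_G(B)$ is Wiegold amounts to showing that every $g\in G$ fixes some point of $G/N_G(B)$, i.e. normalizes some conjugate of $B$; equivalently $g$ lies in $N_G(B)^h$ for some $h$, which is exactly the covering condition $G=\bigcup_{h\in G} N_G(B)^h$.

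First I would treat part (1). The key input is the classical fact from the theory of algebraic groups that every element $g$ of a connected linear algebraic group lies in a Borel subgroup; more precisely, the set of Borel subgroups containing a fixed $g$ is a nonempty closed subvariety of the flag variety. For a general (not necessarily connected) $G$ that is not virtually solvable, I would pass to the identity component $G^\circ$, which is connected and still non-solvable, and observe that $B$ is a Borel of $G^\circ$ as well; every $g\in G$ permutes the Borels of $G^\circ$ by conjugation, and the action of the cyclic group generated by $g$ on the projective flag variety $G^\circ/B$ has a fixed point by the Borel fixed-point theorem. That fixed point is a Borel $B'$ with $gB'g^{-1}=B'$, i.e. $g\in N_G(B')=N_G(B)^h$. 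Hence $G$ is covered by the conjugates of $N_G(B)$. It remains only to check that $N_G(B)$ is a \emph{proper} subgroup, which is where non-virtual-solvability enters: if $N_G(B)=G$ then $B$ would be normal, forcing $G^\circ=B$ to be solvable and $G$ virtually solvable, a contradiction. Thus $N_G(B)$ is a proper subgroup covered by its conjugates, i.e. Wiegold.

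For part (2), with $G$ connected, I want to upgrade from $N_G(B)$ to $B$ itself. The crucial structural fact is that in a connected linear algebraic group a Borel subgroup is self-normalizing: $N_G(B)=B$. This is a standard theorem (a Borel subgroup equals its own normalizer), so the normalizer appearing in part (1) simply coincides with $B$, and the conclusion of part (1) immediately gives that $B$ is Wiegold once we know $B$ is proper — and $B\neq G$ because otherwise $G=B$ would be solvable, contradicting the hypothesis inherited from part (1) that $G$ is not virtually solvable.

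The main obstacle I anticipate is the application of the fixed-point argument to a single element $g$: the Borel fixed-point theorem as usually stated applies to a connected solvable group acting on a complete variety, whereas here the acting group is the cyclic $\langle g\rangle$, which is neither connected nor obviously solvable in the algebraic sense. The cleanest way around this is to avoid the fixed-point theorem for $\langle g\rangle$ and instead use the sharper classical result that the variety of Borel subgroups containing $g$ is nonempty for every $g\in G^\circ$ (equivalently, every element lies in a Borel), which follows from Jordan decomposition together with the facts that unipotent elements and tori each lie in Borels. Extending this to elements of the possibly-disconnected $G$ that normalize but do not lie in $G^\circ$ requires a little more care, and reconciling ``$g$ lies in a Borel'' with ``$g$ normalizes a conjugate of $B$'' (needed when $g\notin G^\circ$) is the delicate point; for elements of $G^\circ$ the two coincide via self-normalization, so the genuinely new work is confined to the component group, where I would argue that $g$ still fixes a point of the flag variety because that variety is complete and the argument passes to the action of the finite-order image of $g$ in the component group.
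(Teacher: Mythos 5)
Your argument for elements of $G^\circ$ (and hence for part (2)) is sound and essentially matches the paper: every element of a connected linear algebraic group lies in a Borel subgroup, all Borels are conjugate and self-normalizing (Chevalley), so the covering $G^\circ=\bigcup_h B^h$ follows — this is exactly the Grothendieck covering theorem the paper invokes for part (2), and your properness argument for $N_G(B)$ is the same as the paper's. The problem is the disconnected case of part (1), which you yourself flag as ``the delicate point'' and then resolve with an argument that does not work. You claim that for $g\notin G^\circ$ the induced automorphism of the flag variety $\mathcal{B}=G^\circ/B$ has a fixed point ``because that variety is complete and the argument passes to the action of the finite-order image of $g$ in the component group.'' Completeness gives fixed points only for \emph{connected solvable} group actions (the Borel fixed-point theorem); a finite-order automorphism of a complete variety need not have a fixed point (translation by a torsion point on an elliptic curve is a standard counterexample), and the acting group $\langle g\rangle$ is neither connected nor algebraic. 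Nor does reducing modulo $G^\circ$ help: you would need a fixed Borel for the actual conjugation action of $g$, not for its image in the (finite) component group.

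The missing ingredient is a genuine theorem, not a softness-of-completeness argument: Steinberg's theorem (Theorem 7.2 of \cite{steinberg-endomorphisms}) that every (surjective) algebraic group endomorphism of a connected linear algebraic group stabilizes some Borel subgroup. Applied to conjugation by an arbitrary $g\in G$ acting on $G^\circ$, it yields a Borel $B'$ with $gB'g^{-1}=B'$, hence $g\in N_G(B')=N_G(B)^h$, which is exactly how the paper closes the covering argument for $N_G(B)$. Your attempted substitutes — Jordan decomposition plus ``every element lies in a Borel'' — only handle $g\in G^\circ$, and the Zariski-closure-of-$\langle g\rangle$ trick runs into the same obstruction (its identity component fixes a complete subvariety of $\mathcal{B}$, but $g$ itself still acts on that subvariety with no guaranteed fixed point). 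So as written the proposal proves the theorem only for connected $G$; to cover the stated generality you must either cite Steinberg's theorem or supply an argument of comparable depth (e.g.\ a Lefschetz-type computation on the cohomology of $\mathcal{B}$), neither of which is present.
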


\begin{proof}
1) Firstly let us prove that $N_G(B)$ is a proper subgroup of $G$.
 Suppose to the contrary that $N_G(B)=G$, i.e. $B$ is normal in $G$.
 Note that, being connected, $B$ is contained in the connected component
 $G_0$ of $G$. Since $B$ is normal in $G$, it also normal in $G_0$.
 Note that $B$ is proper in $G_0$ since otherwise $G$ would be
 a finite extension of $B=G_0$, i.e. virtually solvable.
  Finally by Chevalley result \cite[theorem 11.16]{borel-lga} $B$
  is self-normalizing in $G_0$, contradicting the normality of $B$.
  Hence $N_G(B)$ is a proper subgroup of $G$.

 By a theorem of Steinberg (see Theorem 7.2 of \cite{steinberg-endomorphisms}),
 every automorphism of $G$
 fixes some Borel subgroup of $G$.
 This implies that if $g$ is any element of $G$, then $g^{-1}B'g=B'$ for some Borel subgroup $B'$
 of $G$.
 Thus the union of the normalizers $N_G(B')$ over the Borel subgroups $B'$ of $G$ equals $G$.
 Since the Borel subgroups are all conjugate, it follows that $\bigcup_g (N_G(B))^g = G$, thus
 $N_G(B)$ is Wiegold.

2) In this case we make use of Grothendieck's Covering Theorem \cite[Theorem 4.11]{conrad-reductive}:
 Let $G$ be a connected linear algebraic group over \ac field $k$ and let
 $B \leq G$ be a Borel subgroup then $$G(k) =\bigcup_{g\in G} g^{-1}B(k)g\space .$$

 This immediately implies that $B$ is Wiegold.

\end{proof}

 \textbf{2-dimensional case}.
 The \textsf{standard Borel subgroup} $B$ of $\gltK$  is the group of all invertible
 upper triangle  matrices
 $\left(
\begin{smallmatrix}
a & b \\
0 & d
\end{smallmatrix} \right)$.
 For the Borel subgroup of $\sltK$ we require in
addition that $ad = 1$.
 The following exercise provides a supply of Wiegold subgroups in the
 2-dimensional case, cf. \cite[Exercise 23,p.547]{lang}.

\begin{exercise} Let $K$ be a field in which every quadratic polynomial has a root.
 Let $B$ be the Borel subgroup of $\gltK$ or of $\sltK$.
 Show that $B$ is Wiegold, and conclude that $\gltK$ is not \ig.
\end{exercise}

\textbf{Solution}. We have to show that every matrix $A=\left(
\begin{smallmatrix}
a & b \\
c & d
\end{smallmatrix} \right)\in \gltK$
 is conjugate to a matrix in $B$.
  If $b= 0$  we  conjugate $A$ by
 $\left(
\begin{smallmatrix}
0 & 1 \\
-1 & 0
\end{smallmatrix} \right)$.
 If $b\neq 0$ the conjugation $A^X $ by the matrix
 $X=$
$\left(
\begin{smallmatrix}
1 & 0 \\
x & 1
\end{smallmatrix} \right)$
 gives
 $(A^X)_{2,1}=bx^2+(d-a)x-c$ which can be made  zero by assumption.
 We conclude that $B$ is a Wiegold subgroup (and so $\gltK$ is not \ig~).

 An example of countable $K$ satisfying the condition of the above exercise
 is the "quadratic closure" of $\q$ in $\c$.
  How many such fields are there?

\section{Compact Lie groups and their generalizations}

As we mentioned above every  nonabelian connected compact Lie group is not \ig~.
 Here
 we generalize this remark to definably compact groups
 according to \cite{berarducci}.

We consider groups definable in an o-minimal expansion $\mathcal{M} = (R,<,+,\ldots )$ of a real
closed field $(R,<,+,\ldots)$.
 Classical examples of such groups are the (real) algebraic subgroups
of the general linear group $\glnR$, for instance the orthogonal group $\sonR$.
 Less classical examples of
definable groups can be found in \cite{peterzil-steinhorn} or in \cite{str}.

 \begin{theorem}
If $G$ is a nonabelian definably connected definably compact  group in an o-minimal expansion
 $\mathcal{R}$ of
a real closed field $R$, then for any maximal definable abelian subgroup $T$ of $G$, $G$ is the
union of the conjugates of $T$, i.e. $T$ is a Wiegold subgroup in $G$.
 In particular, $G$ is not \ig.
\end{theorem}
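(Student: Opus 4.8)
The plan is to prove the covering statement $G=\bigcup_{g\in G}T^{g}$ directly, since everything else is then immediate: as $G$ is nonabelian while $T$ is abelian, $T$ is a proper subgroup, so the covering says precisely that $T$ is a Wiegold subgroup, and Lemma \ref{l-4-equivalences} (equivalence of conditions II and III) yields that $G$ is not \ig~. Here I would read $T$ as a maximal torus of $G$ in the o-minimal sense, i.e. a maximal definable, \emph{definably connected}, abelian subgroup (the connectedness being implicit; a finite maximal abelian subgroup, such as the Klein four-group in $SO_3(\r)$, is of course not covered by the statement). Such tori exist by the dimension theory of groups definable in an o-minimal structure, and the real work is to transport the two classical ingredients of the maximal torus theorem to this setting: every element of $G$ lies in some maximal torus, and all maximal tori are conjugate.

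First I would invoke, from the structure theory of definably compact definably connected groups (Peterzil--Steinhorn, Edmundo, Berarducci), that such a group possesses maximal tori which are all conjugate, and that a definably compact definably connected abelian group of dimension $n$ behaves like an $n$-torus at the level of definable homotopy and torsion. Granting conjugacy of maximal tori, the covering reduces to the single assertion that every $g\in G$ lies in a maximal torus: any maximal torus containing $g$ is then a conjugate $T^{h}$ of the fixed $T$, so $g\in T^{h}$.

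To establish surjectivity I would set up the conjugation map $\varphi:(G/T)\times T\to G$, $\varphi(gT,t)=gtg^{-1}$, which is well defined because $T$ is abelian and whose image is exactly $\bigcup_{g}T^{g}$. Here $\dim\big((G/T)\times T\big)=\dim G$, and over the regular locus the fibre is finite of constant size equal to the order of the Weyl group $N_{G}(T)/T$ (finite because $T$ is a maximal torus). The key step is then a definable degree / definable Euler characteristic computation: the definable Euler characteristic $\chi$ is multiplicative along definable fibrations and invariant on the target, so comparing $\chi$ of the two sides forces $\varphi$ onto the definably connected group $G$. This is the o-minimal replacement for the Lefschetz/degree argument that proves surjectivity of the analogous map for compact connected Lie groups, where the exponential map is unavailable over an arbitrary real closed field.

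The hard part will be exactly this last step — controlling the fibres of $\varphi$ and running the Euler-characteristic argument over an arbitrary real closed base. Concretely one must (i) isolate the definable set of regular elements over which $\varphi$ is a finite-to-one definable covering of the expected degree, (ii) show this regular locus is definably dense and definably connected, so the degree is well defined and nonzero, and (iii) conclude that the (definably closed) image of the definably compact source is all of $G$. All three points rest on the machinery of definable Euler characteristic together with the torus and Weyl-group theory for definably compact groups, for which I would cite \cite{berarducci}.
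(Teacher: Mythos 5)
Your proposal and the paper diverge completely in scale: the paper offers no argument at all beyond the single sentence that the theorem ``immediately follows from the main result of \cite{berarducci}'', whereas you set out to reprove that main result. Two things are worth separating. First, your preliminary observation is a genuine improvement on the paper's statement: as literally written, ``any maximal definable abelian subgroup'' is too strong --- the Klein four-group of diagonal matrices in $\mathrm{SO}_3(\r)$ is a maximal definable abelian subgroup whose conjugates consist only of involutions and the identity, so it is not Wiegold --- and your reading of $T$ as a maximal definable \emph{definably connected} abelian subgroup (a maximal torus) is the hypothesis under which Berarducci's covering theorem actually holds. Second, however, your reconstruction of the covering theorem is only a plan, and its central step is exactly where the difficulty lives: the o-minimal Euler characteristic is the combinatorial one, multiplicative in definable families but not a homotopy invariant, and it does not by itself furnish a ``degree'' forcing the conjugation map $(G/T)\times T\to G$ to be onto over a non-archimedean real closed field (where there is no exponential map and no classical degree theory to fall back on); Berarducci's actual route goes through the theory of $0$-groups and a Lefschetz-type fixed-point statement (nonvanishing of $E(G/T)$ forces suitable definable actions on $G/T$ to have fixed points, whence every element lies in a conjugate of $N(T)$ and then of $T$), not through a fibrewise count over a regular locus. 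Since you end by citing \cite{berarducci} for precisely the machinery that would close your points (i)--(iii), your argument is, in substance, the same appeal to that reference that the paper makes --- preceded by a useful correction of the hypothesis, but with the key step left unexecuted.
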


This immediately follows from the main result of \cite{berarducci}.


\section{$\sltr$ is \tig~}

 \begin{theorem}\label{thm-sltr-tig}
 The Lie group $\sltr$ with its canonical topology is topologically invariably generated.
 \end{theorem}
  Structure of the proof:   The theorem will immediately follow from the key lemma
  \ref{l-closed-cc-is-sltr} in which  we prove that if a closed subgroup $G$ of $\sltr$ is conjugation complete then $G=\sltr$. To prove lemma \ref{l-closed-cc-is-sltr} we show that every proper closed subgroup of $\sltr$ has eigen-spectrum which is strictly contained in the eigen-spectrum of $\sltr$. As eigen-spectrum is conjugacy invariant, a \cc~ subgroup must have the same eigen-spectrum of $\sltr$, and so the lemma follows.
   The chapter is ordered as follows: We calculate the eigen-spectrum of $\sltr$ in section \ref{s-conj-sltr}. Then we describe the 1-parameter subgroups of $\sltr$ in section \ref{s-1-parameter}.
   In section \ref{s-1dim-spectra} we describe spectra of 1-dimensional closed subgroups of $\sltr$ and show that their spectra are (much) smaller than that of $\sltr$. In section \ref{s-2dim-spectra} we do the same in case of 2-dimensional subgroups. In section \ref{s-main} we prove the main lemma and theorem \ref{thm-sltr-tig}.

\begin{corollary}
  The Lie group $\psltr$ with its canonical topology is topologically invariably generated.
\end{corollary}

\begin{proof}
  Suppose that $\psltr$ admits a closed Wiegold subgroup $H$. Then the pre-image of $H$ under the quotient map $p\colon \sltr \to \psltr$ is a closed Wiegold subgroup of $\sltr$, contradicting Theorem \ref{thm-sltr-tig}.
  \end{proof}

 \subsection{Conjugacy classes of $\sltr$}\label{s-conj-sltr}
 Following \cite{conrad-decomposing} we remark that conjugacy in $\sltr$
 should not be confused with conjugacy in the
larger group $\gltr$.

 \begin{theorem}[\cite{conrad-decomposing}]
  Let $g\in\sltr$.
\begin{enumerate}
\item
 If $(\mathrm{Tr}g)^2>4$ then $g$ is conjugate to a unique matrix of the form
 \[
 \begin{pmatrix}
  \lambda & 0 \\
  0 & 1/\lambda\\
\end{pmatrix}
 \]
 with $\lambda \in \mathbb{R}$, $|\lambda|>1$.

  \item
  If $(\mathrm{Tr}g)^2=4$ then $g$ is conjugate to exactly one of

\begin{equation}\label{}
 \pm I_2,
  \pm
\begin{pmatrix}
  1 & 1 \\
  0 & 1\\
\end{pmatrix},
 \pm
\begin{pmatrix}
  -1 & 1 \\
  0 & -1\\
\end{pmatrix}.
\end{equation}

\item
 If $(\mathrm{Tr}g)^2<4$ then $g$ is conjugate to a unique  matrix of the form
 \[
  \begin{pmatrix}
  \cos\theta & -\sin\theta \\
  \sin\theta & \cos\theta \\
\end{pmatrix}
 \] other than $\pm I_2.$

\end{enumerate}
\end{theorem}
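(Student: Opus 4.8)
The plan is to read off everything from the characteristic polynomial. Since $\det g = 1$, for $g \in \sltr$ the characteristic polynomial is $\chi_g(t) = t^2 - (\tr g)\,t + 1$, so the eigenvalues are $\tfrac{1}{2}\bigl(\tr g \pm \sqrt{(\tr g)^2 - 4}\,\bigr)$ and their product is $1$. The sign of the discriminant $(\tr g)^2 - 4$ then cleanly separates the three cases, and in each case I would first produce a normal form by a real (or complex) change of basis and then correct the conjugating matrix so that it lies in $\sltr$ rather than merely in $\gltr$. The essential and recurring point --- and the source of the doubling of classes in the statement --- is that conjugacy in $\sltr$ is strictly finer than conjugacy in $\gltr$.

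For $(\tr g)^2 > 4$ the eigenvalues $\lambda, 1/\lambda$ are real, distinct and of modulus $\neq 1$. Choosing real eigenvectors gives $M \in \gltr$ with $M^{-1} g M = \mathrm{diag}(\lambda, 1/\lambda)$, and rescaling one eigenvector by $1/\det M$ forces $M \in \sltr$ without disturbing the diagonal form. Here there is no doubling: the two orderings of the eigenvalues are already $\sltr$-conjugate via $\bigl(\begin{smallmatrix} 0 & 1 \\ -1 & 0 \end{smallmatrix}\bigr)$, so the constraint $|\lambda| > 1$ singles out a unique representative. For $(\tr g)^2 < 4$ the eigenvalues are $e^{\pm i\theta}$, and the real canonical form for a conjugate pair is exactly the rotation $R_\theta = \bigl(\begin{smallmatrix} \cos\theta & -\sin\theta \\ \sin\theta & \cos\theta \end{smallmatrix}\bigr)$; again I would normalise the conjugating matrix into $\sltr$. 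For $(\tr g)^2 = 4$ one has $\tr g = \pm 2$ and a repeated eigenvalue $\pm 1$; writing $g = \pm(I + N)$ with $N$ nilpotent, either $N = 0$ (giving $\pm I_2$) or $N$ has rank one, and choosing $v$ with $Nv \neq 0$ together with $w = Nv$ puts $g$ into the Jordan form $\pm\bigl(\begin{smallmatrix} 1 & 1 \\ 0 & 1 \end{smallmatrix}\bigr)$.

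The uniqueness half is where the main obstacle lies, precisely in the parabolic and elliptic cases, and I would isolate the relevant invariant as a determinant obstruction. For the unipotent classes, a conjugator $P \in \sltr$ taking $\bigl(\begin{smallmatrix} 1 & 1 \\ 0 & 1 \end{smallmatrix}\bigr)$ to $\bigl(\begin{smallmatrix} 1 & -1 \\ 0 & 1 \end{smallmatrix}\bigr)$ must preserve the common fixed line $\langle e_1\rangle$ and hence be upper triangular; a direct computation then shows the off-diagonal entry is scaled by $a^2 > 0$, so the sign of the shear is a genuine $\sltr$-invariant and the two unipotent classes are distinct. An entirely analogous argument, using that the centraliser of a rotation consists of the matrices $aI + bJ$ with $J = \bigl(\begin{smallmatrix} 0 & -1 \\ 1 & 0 \end{smallmatrix}\bigr)$ and determinant $a^2 + b^2 > 0$, shows that no $P \in \sltr$ conjugates $R_\theta$ to $R_{-\theta}$: the reflection $\mathrm{diag}(1,-1) \in \gltr$ does so at the cost of determinant $-1$, and comparing determinants forces $a^2 + b^2 = -1$, a contradiction. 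Hence $\theta \mapsto [R_\theta]$ is injective on $(0,2\pi) \setminus \{\pi\}$ and the listed elliptic representatives are pairwise non-conjugate. Combining existence with these invariants --- the eigenvalues in the hyperbolic case, the sign of the shear in the parabolic case, and the signed angle $\theta$ in the elliptic case --- exhausts and separates all conjugacy classes, which is exactly the assertion.
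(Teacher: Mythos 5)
The paper itself offers no proof of this theorem: it is quoted verbatim from the cited note of Conrad and used as a black box, so there is no internal argument to compare yours against. Judged on its own, your proof is the standard one (and essentially the one in the cited reference): read the eigenvalue structure off $\chi_g(t)=t^2-(\tr g)t+1$, produce a normal form over $\gltr$, normalize the conjugator into $\sltr$, and then separate the $\sltr$-classes that a single $\gltr$-class splits into by a determinant obstruction on the relevant centraliser or stabiliser. The hyperbolic and elliptic cases are handled correctly: in particular the observation that the centraliser of $R_\theta$ is $\{aI+bJ\}$ with determinant $a^2+b^2>0$, so that a putative $\sltr$-conjugator from $R_\theta$ to $R_{-\theta}$ would differ from $\mathrm{diag}(1,-1)$ by a centralising element and force $a^2+b^2=-1$, is exactly the right invariant; the same scheme with the upper-triangular stabiliser of the fixed line handles the two shear signs.

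The one loose end is in the parabolic \emph{existence} step. Writing $g=\pm(I+N)$ and taking the basis $(Nv,v)$ only exhibits a $\gltr$-conjugation to $\pm\bigl(\begin{smallmatrix}1&1\\0&1\end{smallmatrix}\bigr)$; as literally stated this would contradict your own (correct) uniqueness analysis, which shows the two shear signs are \emph{not} $\sltr$-equivalent. Since $\det(cNv\mid cv)=c^2\det(Nv\mid v)$, rescaling cannot change the sign of the determinant of the conjugator, so after normalizing you land on $\pm\bigl(\begin{smallmatrix}1&\varepsilon\\0&1\end{smallmatrix}\bigr)$ with $\varepsilon=\pm1$ determined by the sign of $\det(Nv\mid v)$ (replace $Nv$ by $-Nv$ to flip it). This is precisely why the list in the statement contains both $\pm\bigl(\begin{smallmatrix}1&1\\0&1\end{smallmatrix}\bigr)$ and $\pm\bigl(\begin{smallmatrix}-1&1\\0&-1\end{smallmatrix}\bigr)$ --- i.e.\ all four non-central unipotent-times-$\pm I$ classes --- and the sentence should say that $g$ is $\sltr$-conjugate to one of those, not to $\pm\bigl(\begin{smallmatrix}1&1\\0&1\end{smallmatrix}\bigr)$. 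With that one clause repaired the proof is complete.
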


  For a set of (complex) matrices $M$ let $\sp(M)$ to denote the
  \textsf{eigenvalue spectrum} of $M$, i.e. the set of all eigenvalues
 of matrices in $M$ (counting without multiplicity).
  The following is an immediate consequence of the above result.

\begin{corollary}
  $\sp\left( \sltr\right) =\r^\times \cup \mathbb{S}$, where $\r^\times=\mathbb{R}%
-\left\{ 0\right\} $ \ and \
 $\s=
 \left\{ z\in \mathbb{C}:\left\vert z\right\vert =1\right\}$.

\end{corollary}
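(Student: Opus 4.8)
The plan is to read off the eigenvalue spectrum directly from the classification of conjugacy classes in the preceding theorem, since every matrix is conjugate to one of the normal forms listed there, and conjugation does not change eigenvalues. Thus $\sp(\sltr)$ is exactly the union of the eigenvalue sets of the three families of representatives. First I would handle the hyperbolic case $(\tr g)^2>4$: the representative $\left(\begin{smallmatrix}\lambda&0\\0&1/\lambda\end{smallmatrix}\right)$ with $|\lambda|>1$ contributes the eigenvalues $\lambda$ and $1/\lambda$. As $\lambda$ ranges over all reals with $|\lambda|>1$, the pair $\{\lambda,1/\lambda\}$ sweeps out all nonzero reals except $\pm 1$ (the values with $|\lambda|>1$ give everything outside $[-1,1]$, and their reciprocals $1/\lambda$ fill in the punctured interval $(-1,1)\setminus\{0\}$).

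Next I would treat the parabolic/central case $(\tr g)^2=4$: each of the listed representatives $\pm I_2$, $\pm\left(\begin{smallmatrix}1&1\\0&1\end{smallmatrix}\right)$, $\pm\left(\begin{smallmatrix}-1&1\\0&-1\end{smallmatrix}\right)$ has only the eigenvalues $+1$ or $-1$. These supply precisely the two values $\pm 1$ that were missing from the hyperbolic sweep. Combining the hyperbolic and parabolic contributions therefore accounts for all of $\r^\times=\r\setminus\{0\}$.

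Finally I would do the elliptic case $(\tr g)^2<4$: the rotation representative $\left(\begin{smallmatrix}\cos\theta&-\sin\theta\\\sin\theta&\cos\theta\end{smallmatrix}\right)$ has characteristic polynomial $z^2-2(\cos\theta)z+1$, whose roots are $e^{\pm i\theta}=\cos\theta\pm i\sin\theta$, i.e. points on the unit circle $\s$. Excluding $\pm I_2$ only removes the angles $\theta=0,\pi$, which correspond to the eigenvalues $\pm 1$ already obtained above, so the elliptic classes contribute all of $\s$. Taking the union of the three cases yields $\sp(\sltr)=\r^\times\cup\s$, as claimed.

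I do not expect any genuine obstacle here, since the statement is a bookkeeping consequence of the classification; the only point requiring slight care is verifying that the reciprocal pairing $\{\lambda,1/\lambda\}$ with $|\lambda|>1$ exactly fills $\r^\times\setminus\{\pm1\}$ and that the leftover endpoints $\pm1$ are recovered from the trace-$\pm2$ and elliptic $\theta\in\{0,\pi\}$ cases, so that no nonzero real and no unit-modulus complex number is omitted or spuriously included.
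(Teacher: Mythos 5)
Your proposal is correct and follows exactly the route the paper intends: the paper states the corollary as an immediate consequence of the conjugacy classification and gives no further argument, while you simply spell out the bookkeeping (hyperbolic representatives sweep $\r^\times\setminus\{\pm1\}$, the trace-$\pm2$ classes supply $\pm1$, and the rotations supply $\s$). Nothing is missing.
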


\subsection{One-parameter subgroups of $\sltr$}\label{s-1-parameter}
 Recall that a \textsf{one-parameter subgroup} in $\glnr$ is a homomorphism $\r\rightarrow\glnr$
 and it is necessarily of the form $f(t) = e^{At}$ where $A$ is the matrix $f'(0)$.
 We describe here the conjugacy classes of one-parameter subgroups in $\sltr$.
 For that we need to know the orbits of $\sltr$-action on its Lie algebra.
 We set
 $$\sltra  = \{X\in M_2(\r)~|~ \tr X = 0\}=
 \{(
 \begin{smallmatrix}
a & b \\
c & -a%
\end{smallmatrix}
)
 :a,b,c\in\r
 \}
 $$
 to denote the Lie algebra of the Lie group $\sltr$.
 For a nonzero $2\times 2$ real matrix
 $X=(
 \begin{smallmatrix}
a & b \\
c & -a%
\end{smallmatrix}%
 )\in \sltra$ we  have that the characteristic polynomial is $x^2-(a^2+bc)$ and therefore its eigenvalues are $\pm \sqrt{a^2+bc}$.
 There are three cases:

  1)  \emph{$a^2+bc>0$.}
In this case both eigenvalues are real and non-zero. Denote them by $\pm t$. Then $X$ is conjugate by some $g\in\gltr$ to the diagonal matrix
 $
( \begin{smallmatrix}
t & 0 \\
0 & -t%
\end{smallmatrix}.%
 )$
 To see that $X$ is also conjugate to
 $(\begin{smallmatrix}
t & 0 \\
0 & -t%
\end{smallmatrix})$
 by the smaller group $\sltr$  we replace $g$ by
  $\left(\begin{smallmatrix}
\det(g)^{-1} & 0 \\
0 & 1%
\end{smallmatrix}\right)
 g $.%

Matrices of the form
 $(\begin{smallmatrix}
t & 0 \\
0 & -t%
\end{smallmatrix})$
constitute a 1-dimensional Lie subalgebra
$\mathfrak{a}=
\r (\begin{smallmatrix}
1 & 0 \\
0 & -1%
\end{smallmatrix})
$
  of $\sltra$ called the \textsf{split Cartan subalgebra}.
  Exponentiating  elements of $\mathfrak{a}$ we obtain
  the \textsf{split Cartan subgroup}
 $$A=\left\{
 \begin{pmatrix}
e^{t} & 0 \\
0 & e^{-t}%
\end{pmatrix}
:t\in \r\right\},$$
  which is a 1-parameter subgroup of $\sltr$ corresponding to
  subalgebra $\mathfrak{a}$.

  2)
 \emph{$a^2+bc=0$}.
In this case $X$ has zero eigenvalue of multiplicity 2.
  Then $X$ is conjugate by  some $g\in\gltr$ to a nilpotent matrix of the form
 $
 (\begin{smallmatrix}
0 & d \\
0 & 0%
\end{smallmatrix}), 0\neq d\in\r.%
 $
 Again we can make $g$ to be in $\sltr$ replacing  $g$ by
 the matrix
$\left(
  \begin{smallmatrix}
    \det(g)^{-1} & 0 \\
    0 & 1 \\
  \end{smallmatrix}
\right)$$g$.

Matrices of the form
 $(\begin{smallmatrix}
0 & t \\
0 & 0%
\end{smallmatrix})$
constitute a 1-dimensional Lie subalgebra
$\mathfrak{n}=
\r (\begin{smallmatrix}
0 & 1 \\
0 & 0%
\end{smallmatrix})$
  of $\sltra$ called \textsf{nilpotent subalgebra} $\mathfrak{n}$.
  Exponentiating elements of $\mathfrak{n}$ we obtain
  the \textsf{unipotent subgroup}
 \[
 U=
 \begin{pmatrix}
1 & \r \\
0 & 1%
\end{pmatrix}
\]
  which is a 1-parameter subgroup of $\sltr$ corresponding to the subalgebra $\mathfrak{n}$.

 3) \emph{$a^2+bc<0$.}
In this case $X$ has nonzero purely imaginary conjugate eigenvalues $\pm\theta i ~~(\theta=\sqrt{-a^2-bc})$.
 We claim that $X$ is conjugate to one of the matrices
$
\pm\theta\left(
  \begin{smallmatrix}
    0 & 1 \\
    -1  & 0 \\
  \end{smallmatrix}
\right)
$
 by a matrix from $\sltr$.
 There are real linearly independent column vectors $u,v$ such that
 $X(u+iv)=\theta i(u+iv)=\theta (-v+iu)=Xu+iXv$, from what we obtain
 $Xu=-\theta v, Xv=\theta u$.
  It follows that a 2-by-2 matrix $(u,v)\in M_2(\r)$ with columns $u,v$
  is invertible.
  We have
 $X(u,v)=(Xu,Xv)=\theta (-v,u)=\theta   (u,v)
 \left(
   \begin{smallmatrix}
   0 & 1 \\
   -1 & 0 \\
   \end{smallmatrix}
 \right).
 $

It follows that
\begin{equation} (u,v)^{-1}X(u,v)=\theta
   \begin{pmatrix}
  0 & 1 \\
 -1 & 0 \\
   \end{pmatrix}
 .
\end{equation}
\label{eq-conjug-x-by-uv}

 We wish to modify $g=(u,v)$ to be in $\sltr$.
 By multiplying the matrix $g$ by $\sqrt{|g|}^{-1}$, we may assume $\det(g)=\pm 1$.
 If $\det(g)=-1$ then we replace $g$ by
$g \left(
   \begin{smallmatrix}
   1 & 0 \\
   0 & -1 \\
   \end{smallmatrix}
 \right),
 $
which is in $\sltr$, and we get that $g^{-1}Xg=-\theta$
$g \left(
   \begin{smallmatrix}
   0 & 1 \\
   -1 & 0 \\
   \end{smallmatrix}
 \right).
 $

Matrices of the form
 $(\begin{smallmatrix}
0 & t \\
-t & 0%
\end{smallmatrix})$
constitute a 1-dimensional Lie subalgebra
$\mathfrak{k}=
\r (\begin{smallmatrix}
0 & 1 \\
-1 & 0%
\end{smallmatrix})$
  of $\sltra$ called the \textsf{non-split Cartan subalgebra}.
  Exponentiating elements of $\mathfrak{k}$ we obtain
   a 1-parameter subgroup $K=\sotr$ of rotations called
  the \textsf{non-split Cartan subgroup}:

$$ K=\left\{
 \begin{pmatrix}
\cos t & \sin t \\
-\sin t & \cos t%
\end{pmatrix}
:t\in \r\right\}.$$

Clearly $\sotr$ is a compact subgroup of $\sltr$. In fact, it is a maximal closed subgroup of
$\sltr$ \cite[Ch.VI, section 1]{glasner}.

\subsection{One-dimensional Lie subgroups of $\sltr$ and their spectra}\label{s-1dim-spectra}

\begin{lemma}\label{l-1dim-small-spectra}
 The eigen-spectrum of every closed 1-dimensional subgroup in $\sltr$ is
 a proper subset in $\sp(\sltr)$.
\end{lemma}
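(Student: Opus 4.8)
The plan is to exploit that the eigenvalue spectrum is a conjugacy invariant, so $\sp(H^g)=\sp(H)$ for all $g$, and therefore I may freely replace $H$ by any conjugate. Since $H$ is $1$-dimensional, its identity component $H_0$ is a connected $1$-dimensional (hence abelian) closed subgroup, so $H_0=\exp(\r X)$ for some nonzero $X\in\sltra$. By the trichotomy established in Section \ref{s-1-parameter}, the line $\r X$ is $\sltr$-conjugate to exactly one of $\mathfrak{a}$, $\mathfrak{n}$, $\mathfrak{k}$ (the three cases are distinguished by the eigenvalue type of $X$, which is conjugation invariant and unaffected by scaling); hence after conjugating I may assume $H_0$ is one of the standard one-parameter subgroups $A$, $U$, or $K=\sotr$. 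The key structural remark is that $H$ normalizes its own identity component, so $H\le N(H_0)$, and the whole problem reduces to understanding the few possibilities for $H$ inside each of the three normalizers and bounding the eigenvalues that can occur.

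First I would compute the three normalizers. For $H_0=U$ one checks that $U$ has a unique invariant line, so $N(U)$ is the full upper-triangular (Borel) subgroup $B=\{\left(\begin{smallmatrix} a & b \\ 0 & a^{-1}\end{smallmatrix}\right):a\neq 0\}$; every element of $B$ is triangular, so all its eigenvalues are the real numbers $a,a^{-1}$, giving $\sp(H)\subseteq\r^\times$. For $H_0=A$ a regular element of $A$ has the two coordinate axes as its only eigenlines, so $N(A)$ is the group of monomial matrices; its diagonal elements contribute real eigenvalues, while every antidiagonal element $\left(\begin{smallmatrix} 0 & \mu \\ -\mu^{-1} & 0\end{smallmatrix}\right)$ has characteristic polynomial $\lambda^2+1$ and hence eigenvalues $\pm i$, so $\sp(H)\subseteq\r^\times\cup\{i,-i\}$. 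For $H_0=K$ I would argue that $K$ is self-normalizing: since $K$ is a maximal closed subgroup of $\sltr$, the closed subgroup $N(K)\supseteq K$ equals either $K$ or all of $\sltr$, and the latter is impossible because $K$ is not normal (a shear from $U$ conjugates a rotation out of $K$); thus $H=K$ and $\sp(H)=\s$.

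Finally I would read off properness inside $\sp(\sltr)=\r^\times\cup\s$. In the $A$- and $U$-cases we have $\sp(H)\subseteq\r^\times\cup\{i,-i\}$, which omits, for instance, the point $e^{i\pi/3}\in\s$, while in the $K$-case $\sp(H)=\s$ omits $2\in\r^\times$; in every case $\sp(H)\subsetneq\sp(\sltr)$. I expect the main obstacle to be handling the \emph{disconnected} $1$-dimensional subgroups correctly: one must not forget that extra connected components (the coset of $-I$ enlarging $A$ to the full diagonal, the Weyl-type antidiagonal element $\left(\begin{smallmatrix}0 & 1\\ -1 & 0\end{smallmatrix}\right)$ normalizing $A$, or discrete diagonal pieces lying over $U$ inside $B$) genuinely enlarge the spectrum, and the crux is verifying that these new eigenvalues nevertheless stay confined to $\r^\times\cup\{\pm i\}$ rather than sweeping out the whole circle. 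Reducing to $H\le N(H_0)$ and computing the normalizers explicitly is exactly what keeps this extra spectrum under control.
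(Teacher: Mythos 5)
Your proof is correct and follows essentially the same route as the paper: reduce to the three standard one-parameter subgroups $A$, $U$, $K$, use $H\le N(H_0)$, and bound the spectra of the normalizers (with $\sp(N(A))\subseteq\r^\times\cup\{\pm i\}$, $\sp(B)\subseteq\r^\times$, $N(K)=K$). The only cosmetic difference is that you derive $N(K)=K$ from the maximality of $\sotr$ rather than citing it directly, and you describe $N(A)$ as the monomial group rather than via the Bruhat decomposition $A\cup wA$ — these are the same subgroups.
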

 \begin{proof}

 Suppose that $G$ is a Lie subgroup of $\sltr$ such that
 its component $G^0$ is 1-dimensional, so is a 1-parameter subgroup of one of the above types
 $A,U,K$.

  1)  In case $G^{0}=A$ we have
\begin{equation*}
A=G^{0}\trianglelefteq G\trianglelefteq N(A),
\end{equation*}

where $N\left( A\right) $ is the normalizer of $A$ in $\sltr$.
 By \textsf{Bruhat decomposition} $N(A)=A\cup wA$, where
$w=%
(\begin{smallmatrix}
0 & 1 \\
-1 & 0%
\end{smallmatrix})$
 is the \textsf{Weyl element} (see \cite[p.539]{lang}).
 We conclude that
 $$\sp\left( G\right) \leq
 \sp \left( N\left( A\right) \right) =
 \sp(A)\cup\sp(wA).$$
 Note that (surprisingly but luckily) $\sp(wA)$ equals
 \[
\sp \left\{
 \begin{pmatrix}
0 & e^{-t} \\
-e^t & 0%
\end{pmatrix},
t\in\r%
\right\} =
 \{\pm i\}
 \]
 and indeed $\sp(G)\leq \r^{\times}\cup \{\pm   i\}$ is a proper subset of $\sp(\sltr)$.

2) If $G^{0}=U$ is unipotent, then it is easy to see $N(U)=B$, where
 $B=\left\{ \pm
(\begin{smallmatrix}
e^{t} & s \\
0 & e^{-t}%
\end{smallmatrix})%
;t,s\in R\right\} .$
 Hence
 $G^{0}\trianglelefteq G\trianglelefteq N(U)=B$  and clearly $\sp\left(
G\right) \leq \sp\left( B\right) =\mathbb{R}^{\times}$.

 3) Finally, in case $G^{0}=K$ we have $N\left( K\right) =K$, see \cite[Ch.VI, sec. 1]{glasner},
  hence
 $\sp\left( G\right) \leq \sp\left( K\right) =\mathbb{S}$.

 \end{proof}
\subsection{2-dimensional Lie subgroups of $\sltr$ and their spectra}\label{s-2dim-spectra}
 A two-dimensional Lie algebra over $\r$ is either abelian
 or solvable. Furthermore a solvable Lie algebra  $\mathfrak{s}_2$
 has generators $e,f$ and defining relation $[ef]=e$ ( see \cite{jacobson}).
 Using the above classification of 1-dimensional subalgebras it is easy
 to conclude that there are no two-dimensional abelian subalgebras in $\sltra$.
 On the other hand there is only one (up to conjugacy) subalgebra isomorphic to $\mathfrak{s}_2$,
 namely
 the Borel subalgebra
\[
\mathfrak{b}=
 \left\{
\begin{pmatrix}
{t} & s \\
0 & {-t}%
\end{pmatrix}%
;t,s\in \r
 \right\}.
\]

 The (connected) component $B^0$ of the Borel subgroup
 $$
B=\left\{ \pm
\begin{pmatrix}
e^{t} & s \\
0 & e^{-t}%
\end{pmatrix}%
;t,s\in R\right\} .$$
 has $\mathfrak{b}$ as its Lie algebra.
  It follows that
 $B^0$ is the only (up to conjugacy) 2-dimensional connected subgroup of $\sltr$.
 We conclude with

\begin{lemma}\label{l-2dim-small-spectra}
 Let $G$ be a Lie subgroup of $\sltr$ such that
 its component $G^0$ is a component $B^0$ of the Borel subgroup $B$.
Then $G\leq B$ and
$\sp\left( G\right) =\r^{\times}$.
\end{lemma}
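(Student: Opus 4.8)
The plan is to derive $G\leq B$ from the fact that the identity component is normal, and then to read off the spectrum by monotonicity together with the upper-triangular shape of the elements of $B$. First I would record that $G^{0}=B^{0}$ is normal in $G$ (the identity component of a topological group is always normal), so $G$ lies in the normalizer $N(B^{0}):=N_{\sltr}(B^{0})$. Thus the first assertion reduces to the single computation $N(B^{0})=B$. One inclusion is immediate: $B=B^{0}\cup(-I)B^{0}$ normalizes $B^{0}$, since $B^{0}$ is a group and $-I$ is central in $\sltr$, whence $B\leq N(B^{0})$.

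For the reverse inclusion I would pass to a characteristic subgroup of $B^{0}$ on which the previous lemma already does the work. Since $B^{0}$ is the connected $2$-dimensional solvable group with Lie algebra $\mathfrak{b}$, and $[\mathfrak{b},\mathfrak{b}]=\mathfrak{n}$ (a one-line bracket computation), its commutator subgroup is the unipotent group $U$; so $U=[B^{0},B^{0}]$ is characteristic in $B^{0}$. Hence every $g$ normalizing $B^{0}$ also normalizes $U$, i.e. $N(B^{0})\leq N(U)$, and by the identification $N(U)=B$ established in Lemma \ref{l-1dim-small-spectra} (case $G^{0}=U$) we get $N(B^{0})\leq B$. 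Combining the two inclusions yields $N(B^{0})=B$, and therefore $B^{0}\leq G\leq B$. (Alternatively one identifies $\mathfrak{b}$ with the stabilizer of the line $\langle e_{1}\rangle$ and observes that anything normalizing $\mathfrak{b}$ must preserve this line, hence sit in the upper-triangular group $B$; the commutator route is shorter because it re-uses the earlier lemma.) I expect this normalizer identification to be the one genuinely non-formal step — everything else is bookkeeping.

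Finally, for the spectrum I would use that $\sp$ is monotone under inclusion and that $B$ is precisely the group of upper-triangular matrices of $\sltr$ (a positive diagonal lands in $B^{0}$, a negative one in $(-I)B^{0}$), so each of its elements has eigenvalues equal to its diagonal entries $a,a^{-1}$ with $a\in\r^{\times}$; hence $\sp(B)=\r^{\times}=\r_{0}$, while restricting to positive diagonals gives $\sp(B^{0})=\r_{>0}$. From $B^{0}\leq G\leq B$ I then read $\r_{>0}=\sp(B^{0})\subseteq\sp(G)\subseteq\sp(B)=\r_{0}$, and since $B/B^{0}\cong\z/2$ is generated by the class of $-I$, in fact $G$ equals $B^{0}$ or $B$, so $\sp(G)$ is $\r_{>0}$ when $-I\notin G$ and all of $\r_{0}$ when $-I\in G$. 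In every case $\sp(G)\subseteq\r_{0}$, a proper subset of $\sp(\sltr)=\r^{\times}\cup\s$ consisting entirely of real numbers — which is exactly the smallness needed to feed into the main lemma \ref{l-closed-cc-is-sltr}.
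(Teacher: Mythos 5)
Your argument is correct and matches the (largely implicit) reasoning of the paper: the paper offers no written proof of this lemma, deducing it directly from the preceding classification of two-dimensional subalgebras, and the route you take --- $G$ normalizes its identity component $B^{0}$, hence normalizes the characteristic subgroup $U=[B^{0},B^{0}]$, hence lies in $N(U)=B$ by the earlier case analysis, after which the spectrum is read off from the upper-triangular form --- is exactly the natural way to fill in those details. Your closing observation is also a genuine (if harmless) correction to the statement: when $-I\notin G$, i.e. $G=B^{0}$, one only gets $\sp(G)=\r_{>0}\subsetneq\r_{0}$, so the asserted equality $\sp(G)=\r_{0}$ should be an inclusion; this does not affect Lemma \ref{l-closed-cc-is-sltr}, which only needs $\sp(G)$ to be a proper subset of $\sp(\sltr)=\r^{\times}\cup\s$.
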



\subsection{Main lemma and theorem \ref{thm-sltr-tig} } \label{s-main}

\begin{lemma}\label{l-closed-cc-is-sltr}
 If a closed subgroup $G$ of $\sltr$ is
conjugation complete then $G=\sltr$.
\end{lemma}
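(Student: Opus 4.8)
The plan is to turn conjugation completeness into the spectral identity $\sp(G)=\sp(\sltr)$ and then to use the dimension bounds of the previous subsections to force $G$ to exhaust $\sltr$.

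Since $G$ is closed it is a Lie subgroup of $\sltr$ by Cartan's closed subgroup theorem, so $\dim G\in\{0,1,2,3\}$; as $\sltr$ is connected and three-dimensional it suffices to prove $\dim G=3$. First I would establish $\sp(G)=\r^\times\cup\s$. Every $\lambda\in\sp(\sltr)$ is an eigenvalue of some non-central $h\in\sltr$, whose conjugacy class is non-trivial and is therefore met by $G$; the element of $G$ lying in that class is conjugate to $h$ and hence has $\lambda$ among its eigenvalues. Thus $\r^\times\cup\s=\sp(\sltr)\subseteq\sp(G)$, and the reverse inclusion is automatic. I would also record that $\sp$ is invariant under conjugation, which lets me invoke the lemmas of the previous subsections after replacing $G$ by a conjugate.

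Next I would rule out $\dim G\le 2$. If $\dim G=0$ then $G$ is discrete and closed in the second countable group $\sltr$, hence countable, so $\sp(G)$ is countable, contradicting the uncountability of $\r^\times\cup\s$. If $\dim G=1$ then the identity component $G^0$ is a one-parameter subgroup, necessarily conjugate to one of $A,U,K$; by Lemma \ref{l-1dim-small-spectra}, applied after conjugating, $\sp(G)$ is a proper subset of $\sp(\sltr)$, a contradiction. If $\dim G=2$ then $G^0$ is the unique connected two-dimensional subgroup up to conjugacy, so after conjugating we may take $G^0=B^0$; Lemma \ref{l-2dim-small-spectra} then yields $\sp(G)=\r_0$, which omits the non-real points of $\s$ and so is again proper in $\sp(\sltr)$, a contradiction.

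The only surviving case is $\dim G=3$, whence $G^0=\sltr$ and therefore $G=\sltr$, proving the lemma. The one place demanding care, and the main obstacle, is the first step: conjugation completeness must be converted into a genuine equality of eigenvalue sets, and here it is essential that conjugate matrices share their eigenvalues, so that meeting a conjugacy class really does deposit the corresponding point of $\sp(\sltr)$ inside $G$. Once $\sp(G)=\sp(\sltr)$ is secured, the classification of low-dimensional subgroups and their spectra established above does the rest.
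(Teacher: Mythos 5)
Your proposal is correct and follows essentially the same route as the paper: Cartan's closed subgroup theorem, the reduction of conjugation completeness to the spectral identity $\sp(G)=\sp(\sltr)$, and the case analysis on $\dim G^{0}$ using Lemmas \ref{l-1dim-small-spectra} and \ref{l-2dim-small-spectra} together with the countability argument in dimension zero. Your explicit justification of $\sp(\sltr)\subseteq\sp(G)$ via non-central representatives, and your remark that one must conjugate $G$ before invoking the classification of one- and two-dimensional subgroups, are welcome refinements of steps the paper leaves implicit.
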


\begin{proof}
Suppose, on the contrary, that there exists a proper closed  and conjugation complete subgroup
 $G$ in  $\sltr$.
 It follows from conjugation completeness that
 $\sp (G) =\sp( \sltr)$.

 By Cartan's theorem any closed subgroup of $\glnr$ is a Lie group.
 (A short elegant proof can be found in \cite{adams}, pages 17-19).
 Hence $G$ is a Lie subgroup of $\sltr$.
 The connected component $G^{0}$ of $G$ is a proper closed normal Lie subgroup of $G$.
  Since $G$ is a proper
 subgroup of 3-dimensional connected Lie group $\sltr$, we conclude that $\dim G^{0}=0,1$ or 2.

  If $\dim G^{0}=0$ then $G$ is at most countable discrete subgroup of
$\sltr$ and thus its spectrum is at most countable, so $\sp\left( G\right) $ is properly contained
in $\r^\times \cup \s = \sp \left( \sltr\right) $. Contradiction.

  If $\dim G^{0}=1$ then $G^0$ is a 1-parameter subgroup of $\sltr$ and by lemma
    \ref{l-1dim-small-spectra} the eigen-spectrum of $G$ is
 a proper subset in $\sp(\sltr)$. Contradiction.

 Finally if $\dim G^{0}=2$ then $G^{0}$ is the connected component of the Borel subgroup
 and by lemma
    \ref{l-2dim-small-spectra} the eigen-spectrum of $G$ is
 a proper subset in $\sp(\sltr)$. Contradiction.

   The lemma and thereby theorem \ref{thm-sltr-tig} are proved.
\end{proof}

\section{$\autt$  is \tig~} \label{autt is tig}

 A \textsf{(simplicial) graph} $\Gamma=(V,E)$ consists of  a set of \textsf{vertices} $V$ and a family
 $E$ of \textsf{edges} which are 2-element subsets of $V$.
  A \textsf{(simple) path} from vertex $x$ to vertex $y$ is a sequence of vertices
  $\pi: x=x_0,x_1,\ldots,\ldots, x_n=y$ such that every
  subset $\{x_i,x_{i+1}\}$ is an edge and every two subsequent edges are distinct.
  The number $n\geq 0$ is the \textsf{length} of $\pi$.
  Define the \textsf{distance} $d(x,y)$  between vertices $x$ and $y$ to be the minimum
  of the lengths of simple paths joining them.
  A \textsf{loop} is a path of positive length such that $x_0=x_n$.
  A \textsf{tree} is a (simplicial) graph without loops.
 The \textsf{valence} or \textsf{degree} of a vertex $x$ in a tree $T$ is the number of edges that
 contain $x$.
 A \textsf{regular} $d$-tree is a tree with every vertex having degree $d$.
   Consider the group $\autt$ of \textsf{automorphisms (=isometries)} of $T$,
equipped with the standard pointwise convergence topology.
  Then $\autt$ turns into a totally disconnected, locally compact, unimodular topological group.
 In this section we prove the following theorem:

\begin{theorem}\label{thm-autt-tig}
  The automorphism group $\autt$ of a $d$-regular tree is \tig~ for every $d\geq 2$.
\end{theorem}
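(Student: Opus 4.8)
The plan is to verify condition \textbf{II} of Lemma~\ref{l-4-equivalences}: I will show that $\autt$ has no proper closed Wiegold subgroup, i.e. that every closed \cc~ subgroup $G\le\autt$ is all of $\autt$ (these are equivalent, since $\bigcup_g G^g=\autt$ says exactly that every conjugacy class meets $G$). First I dispose of the degenerate case $m=2$: then $T$ is a bi-infinite line and $\autt$ is the infinite dihedral group, which is virtually solvable and hence already \tig~ by the earlier results. So from now on $m\ge 3$.

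The main tool is Tits' classification: each element of $\autt$ is elliptic (fixes a vertex or inverts an edge) or hyperbolic (translates along an axis), and, via its action on $T$, the subgroup $G$ either is bounded, fixes a single end, stabilizes a line (a pair of ends), or is of general type (no fixed point at infinity, with a unique minimal invariant subtree equal to the union of its axes). To locate $G$ in this trichotomy I use explicit test classes. Fix a vertex $v$ and let $\rho$ be the full rotation fixing $v$, acting on the $m$ branches at $v$ as a chosen $m$-cycle and permuting the corresponding subtrees isomorphically. Since an $m$-cycle has no fixed point and, for $m\ge 3$, no invariant $2$-element set, $\rho$ fixes exactly $v$, fixes no end and preserves no line; the same holds for every conjugate, which fixes $h(v)$ and acts there as a conjugate $m$-cycle. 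As $G$ is \cc~ it meets the class of $\rho$ and it meets every hyperbolic class; the former forbids $G$ from fixing an end or stabilizing a line, and the latter forbids $G$ from being bounded. Hence $G$ is of general type.

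I next reduce everything to a single local statement. I claim that a closed \cc~ subgroup which is transitive on vertices and satisfies $G_v=\autt_v$ must equal $\autt$. Indeed, for a neighbour $v'$ of $v$ choose $g\in G$ with $g(v)=v'$; such $g$ interchanges the two colour classes and conjugates $\autt_v$ onto $\autt_{v'}$, so $G$ contains $\autt_v$, $\autt_{v'}$ and a type-reversing element, whence $G\supseteq\langle\autt_v,\autt_{v'}\rangle=\autt^{+}$ together with a type-reverser, i.e. $G=\autt$. Moreover vertex-transitivity itself follows once $G$ contains a length-$1$ translation (supplied by conjugation completeness) and $G_v$ acts transitively on the $m$ branches at $v$; the latter is again a consequence of $G_v=\autt_v$. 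So the whole theorem is reduced to proving local fullness $G_v=\autt_v$.

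The hard part will be exactly this local fullness. Writing $W_n=\autt_v/\autt_v^{(n)}=\mathrm{Aut}(B(v,n))$ for the finite automorphism group of the radius-$n$ ball (an iterated wreath product with top factor $S_m$), and using that $\autt_v=\varprojlim W_n$ is profinite and $G_v$ closed, it suffices to show that the image of $G_v$ in each $W_n$ is all of $W_n$. By the finite case of the theory (Jordan's theorem) a subgroup of $W_n$ meeting every conjugacy class of $W_n$ is the whole group, so I would try to prove that the image of $G_v$ meets every $W_n$-class. Here lies the genuine obstacle: conjugation completeness guarantees only that $G$ meets every \emph{global} ($\autt$-)conjugacy class, and distinct $W_n$-classes can fuse in $\autt$, so a global representative landing in $G_v$ need not realise a prescribed $W_n$-class. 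Overcoming this requires constructing, for each $W_n$-class, a rigid test element of $\autt$ whose global conjugacy class forces any representative fixing $v$ into that $W_n$-class; this is what rules out all proper "local" closed subgroups (such as the Burger--Mozes groups $U(F)$ with $F\lneq S_m$) and is the combinatorial heart of the argument. Alternatively one may verify condition \textbf{IY} directly by exhibiting once and for all a family of such rigid elliptic classes together with translation classes whose arbitrary representatives topologically generate $\autt$; either way the rigidity construction is the step I expect to be the main difficulty.
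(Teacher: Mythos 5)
Your skeleton agrees with the paper's: show that every closed \cc subgroup $H\le\autt$ is all of $\autt$, by first making $H$ vertex-transitive and then proving $Stab_H(v)=Stab_{\autt}(v)$, using that the finite symmetric groups arising as local quotients are \ig. However, the combinatorial heart of the argument — which you yourself flag as ``the step I expect to be the main difficulty'' — is exactly the part you do not carry out, so this is a genuine gap rather than a complete proof. The fusion obstacle you name (an $\autt$-conjugacy class does not pin down a conjugacy class in the finite quotient $W_n$) is resolved in the paper by introducing elliptic elements of type $(n,\mathcal{P})$: an element fixing $v$ that moves only the $d-1$ (or $d$, when $n=1$) neighbours of a single vertex $u\in S(v,n-1)$, with orbit sizes given by the partition $\mathcal{P}$. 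The classification of conjugacy classes in $\autt$ by orbital types (after Gawron--Nekrashevich--Sushchanskii) shows that the type $(n,\mathcal{P})$ is a conjugation invariant and that every type is realized, so a \cc subgroup contains one element of each type; since conjugacy classes of $S_{d}$ and $S_{d-1}$ are precisely cycle types, i.e.\ partitions, the images of these elements under the local maps $\varphi_{v,n,u}$ make the image of $Stab_H(v)$ conjugation complete in the finite symmetric group. This is precisely the ``rigid test class'' construction you postpone.

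There is also a circularity in your reduction: you derive vertex-transitivity from $G_v=\autt_v$, but to prove $G_v=\autt_v$ you must first move representatives of your test classes (which a priori fix only \emph{some} vertex) into $Stab(v)$, and that already requires transitivity. The paper breaks this circle by proving vertex-transitivity independently and first: all spherically transitive elliptic elements form a single conjugacy class, so a \cc subgroup contains one such element $s$ together with a hyperbolic $h$ of translation length $1$, and Lemma \ref{hyperbolic and spherically transitive implies transitivity} shows by an explicit inductive construction that $\langle h,s\rangle$ is already vertex-transitive. Only then are the type-$(n,\mathcal{P})$ elements conjugated to be based at the fixed vertex $v$, and Lemma \ref{elliptic elements of all types generate the stabilizer} (using closedness of $Stab_H(v)$ and an approximation argument ball by ball) yields local fullness. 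To complete your proof you would need both the spherically transitive ingredient for transitivity and the type-$(n,\mathcal{P})$ construction for local fullness; neither is present in the proposal.
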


\subsection{The classification of isometries} \label{classification AutT}
  According to \cite{tits} for a single automorphism $g$ of a $d$-regular tree $T$
   there are  three mutually exclusive cases to consider:
 \begin{itemize}
   \item $g$ is \textsf{elliptic}, i.e. $g$ has a fixed point in $V(T)$,
   \item $g$ is an \textsf{inversion}, i.e. there is an edge which is
flipped by $g$,
   \item $g$ is \textsf{hyperbolic} with an \textsf{axis} $A_g$ : that  is to say $A_g$
    is a subtree with all vertices of degree 2 and $g$ acts on  $A_g$ by translation
 by a positive amount, denoted $|g|$. The number $|g|$ is called the \textsf{translation length}
 of $g$.
 \end{itemize}
 We will denote by $\scre,\scri,\scrh$ the classes of elliptics, inversions and
 hyperbolics, respectively. It is easy to see that they are all nonempty and
 pairwise disjoint. Moreover each of these classes is invariant under conjugation.

 Observe that conjugation translates the axes and the fixed points, i.e.
 if $g,h\in \autt$ and $h$ is hyperbolic  then $A_{g^{-1}hg}=gA_h$.
 If $h$ is elliptic fixing $v\in V(T)$, then $g^{-1}hg$ fixes
 $gv$, and if $h$ is an inversion flipping the edge $e\in E(T)$, then $g^{-1}hg$ flips $ge$.

\subsection{Vertex stabilizers}
 Let $T$ be a $d$-regular tree, $d\geq 2$. We denote the pointwise stabilizer in $\autt$ of a subset $M\subset V(T)$ by $Stab_{\autt}(M)$, or by $Stab(M)$.
 When $v\in V(T)$, we may simply write $Stab(v)$ to mean $Stab(\{v\})$.
 When $K\leq \autt$ is a subgroup, we denote by $Stab_{K}(M)$ the stabilizer of $M$ inside $K$.
 For a vertex $v\in V(T)$ and $n\geq 0$ we denote by $B(v,n)\subset V(T)$ the closed ball of radius $n$  about $v$.

  Fix a vertex $v\in V(T)$ and denote by $(T,v)$ the rooted tree $T$ with a root $v$. We identify the stabilizer subgroup $Stab(v)\leq \autt$ with the group $Aut(T,v)$ of automorphisms of the rooted tree $(T,v)$.
The group $Aut(T,v)$ is a profinite group. Indeed, it is the inverse limit of its finite quotients $Aut(T,v)/Stab(B(v,n))$.
Just like finite groups, every profinite group is \tig . The proof is due to \cite{KLS2}, we sketch it here for completeness:

\begin{lemma}
  A profinite group is \tig .
\end{lemma}

\begin{proof}(sketched).
  Let $G$ be a profinite group.
Every proper closed subgroup of a profinite group is contained in a maximal open subgroup
$M$. Since $M$ has finite index it cannot be conjugation complete, hence G contains no Wiegold subgroups.
\end{proof}

\begin{corollary} \label{AutTv is tig}
  The group $Aut(T,v)$ of automorphisms of a rooted $d$-regular tree, $d\geq 2$, is \tig.
\end{corollary}

In the next section we present the conjugacy classes of $\autt$ and observe the relations between conjugacy in $\autt$ and conjugacy in $Aut(T,v)$.

\subsection{Conjugacy classes of $\autt$} \label{AutT conjugacy classes}

 We follow \cite{gns} for the description of the conjugacy classes of the groups $\autt$ and $Aut(T,v)$.
 For $g\in \autt$ we define the \textsf{orbital type} of $g$ to be the labeled graph $T_g$ of
 $g^{\z}$-orbits, where $g^{\z}$ is the cyclic group generated by $g$.
  That is, the vertices of $T_g$ are $g^{\z}$-orbits; An edge  $(o_1,o_2)\in E(T_g)$ is connecting two orbits $o_1,o_2\in V(T_g)$ if the orbits contain vertices $x_1\in o_1,x_2\in o_2$ which were connected by an edge $(x_1,x_2)\in E(T)$ in the original tree; Each $g^{\z}$-orbit is labeled by its cardinality, in $\mathbb{N}\cup\{\infty\}$.
 Two orbital types are called equivalent if they are isomorphic as labeled trees.
 We call an orbital type \textsf{elliptic} if it contains an orbit labeled by $1$. Observe that $g$ has an elliptic orbital type \iff $g$ is elliptic.

Any conjugate of an elliptic element (inversion, hyperbolic) is clearly elliptic (resp. inversion,
hyperbolic) as well. Therefore, conjugacy classes of $\autt$ can be described separately in each of
these cases.

\begin{lemma} \label{conj in AutT}
 If $g,g'\in \autt$ are both elliptic or both inversions then they
 are conjugate to each other \iff their orbital types are equivalent.
 If  $h,h'\in \autt$ are hyperbolic then they conjugate to each other \iff they have equal translation
length.
\end{lemma}

In the rooted tree $(T,v)$, all elements are elliptic. We have therefore the following characterization:

 \begin{lemma}\label{conj in Aut(T,v)}
 Elements $g,g'\in Aut(T,v)$ are conjugate to each other (in $Aut(T,v)$) \iff their orbital types are equivalent.
 \end{lemma}

For the full proofs of lemma \ref{conj in AutT} and lemma \ref{conj in Aut(T,v)} see \cite{gns}.

Denote by $\scrot$ the collection of all elliptic orbital types of elements in $\autt$. A direct corollary is the following:

\begin{corollary}
The collection of all orbital types of elements in $Aut(T,v)$ is $\scrot$.
\end{corollary}

\begin{proof}
  Clearly  the collection of all orbital types of elements in $Aut(T,v)$ is contained in $\scrot$, as $Aut(T,v)=Stab_\autt(v)$ is a subgroup of $\autt$. For the other inclusion observe that every element $g$ in $\autt$ which has an elliptic orbital type is elliptic, therefore stabilizes some $u\in V(T)$. Let $h\in \autt$ be an element such that $h.u=v$, then $hgh^{-1}$ is in $Stab(v)$ and has the same orbital type as $g$.
\end{proof}


\subsection{Generating a vertex transitive subgroup}

Let $s\in \autt$ be an elliptic element stabilizing a vertex $v\in V(T)$ and assume that the action
of $s^\z$ is transitive on all spheres about $v$. We call such an element $v$-\textsf{spherically
transitive}.
 An elliptic element $s\in \autt$ is called \textsf{spherically
transitive} if it is $v$-spherically transitive for some $v$.
 For any $v\in V(T)$ there exists a $v$-spherically transitive element in $\autt$ (see \cite{bass-renorm}).
  If $s$ is
 $v$-spherically transitive, then the orbital type of $s$ is an infinite ray
$[v=x_{0},x_{1},x_{2},\dots)$ with labels $f(x_{0})=1$, $f(x_{n})=d(d-1)^{n-1}$, $n\geq1$ (that is,
the size of the $n^{th}$-sphere about $v$). Hence, all spherically transitive elements in $\autt$
are conjugate.


\begin{lemma}
\label{hyperbolic and spherically transitive implies transitivity} Let $h$ be a hyperbolic
 automorphism of translation length $|h|=1$ and let $s$ be a spherically transitive element.
 Then $\langle h,s\rangle$ acts transitively on the vertices of $T$.
\end{lemma}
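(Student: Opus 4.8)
The plan is to fix the vertex $v\in V(T)$ stabilized by the spherically transitive element $s$, set $G=\langle h,s\rangle$, and prove that the orbit $G\cdot v$ is all of $V(T)$; this is exactly transitivity on vertices. The guiding reduction is that, since $\langle s\rangle$ is transitive on every sphere $S(v,n)$ about $v$ and $s\in G$, the orbit $G\cdot v$ is automatically a union of \emph{complete} spheres: if $G\cdot v$ contains even a single vertex at distance $n$ from $v$, then applying suitable powers of $s$ gives $S(v,n)\subseteq G\cdot v$. Hence it suffices to show that $G\cdot v$ meets $S(v,r)$ for every $r\ge 0$, i.e. that the orbit realizes every distance from $v$.

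First I would record the geometry of $h$. Let $A_h$ be the axis, let $\delta=d(v,A_h)$, and let $p\in A_h$ be the projection of $v$. Since $h$ is hyperbolic of translation length $|h|=1$, the projection of $hv$ is the neighbouring axis vertex $hp$ with $d(p,hp)=1$; because $p$ is the nearest point of $A_h$ to $v$, the geodesic $[v,hv]$ splits as $[v,p]\cup[p,hp]\cup[hp,hv]$ with no backtracking, so $d(v,hv)=2\delta+1$, and more generally $d(v,h^k v)=2\delta+k$ for $k\ge 1$. Thus each $h^k v$ lies in $S(v,2\delta+k)$, and by the reduction above $S(v,n)\subseteq G\cdot v$ for every $n\ge 2\delta+1$. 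So all large spheres are captured immediately.

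The remaining task, and the one genuine obstacle, is to reach the small distances $0\le r\le 2\delta$, which powers of $h$ applied to $v$ simply jump over, landing no closer to $v$ than $2\delta+1$. The device is to travel \emph{outward} into a sphere we already own and then pull back with $h^{-1}$. Concretely, fix $r\ge 0$ and let $w$ be the vertex obtained by extending the geodesic from $v$ through $hv$ by $r$ further steps past $hv$ (always possible, as every vertex has degree at least $2$, so geodesics extend indefinitely). Then $hv$ lies on $[v,w]$, whence $d(v,w)=2\delta+1+r$ and $w\in S(v,2\delta+1+r)\subseteq G\cdot v$. Since $h$ is an isometry, $d(v,h^{-1}w)=d(hv,w)=r$, so $h^{-1}w\in G\cdot v$ realizes distance $r$; therefore $G\cdot v$ meets $S(v,r)$ and hence contains all of $S(v,r)$.

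Combining the two ranges gives $G\cdot v\supseteq\bigcup_{r\ge 0}S(v,r)=V(T)$, proving transitivity. I would note that the argument is uniform in the valence (it covers the degenerate case of a $2$-regular tree, where $T$ is a line, with no change), and that the point which makes everything go through without any parity or branching casework is precisely this ``go out and come back'' application of $h^{-1}$ to a fully owned large sphere, combined with the spherical transitivity of $s$ that promotes a single hit on a sphere to the entire sphere.
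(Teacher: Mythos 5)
Your argument is correct and complete, but it takes a genuinely different route from the paper's. You rely on two observations: (i) since $s\in G=\langle h,s\rangle$ acts transitively on every sphere $S(v,n)$, the orbit $G\cdot v$ is a union of \emph{full} spheres, so transitivity reduces to realizing every distance from $v$ in the orbit; and (ii) the displacement formula $d(v,h^k v)=2\delta+k$ (with $\delta=d(v,A_h)$) hands you all spheres of radius at least $2\delta+1$, after which a single ``go out and come back'' application of $h^{-1}$ to a geodesic extended past $hv$ recovers every small radius $0\le r\le 2\delta$. The paper proceeds quite differently: it conjugates $s$ by powers of $h$ to manufacture a second spherically transitive element $s'$ based at a neighbour $u$ of $v$, then runs an inductive push--pull across the edge $(v,u)$, alternating $u$- and $v$-spherical transitivity to produce orbit points at every even distance from $v$, with a final parity adjustment by $h$. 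Your version avoids both the construction of $s'$ and the parity casework, at the price of invoking the (standard, and correctly justified in your write-up) formula $d(x,h^kx)=2d(x,A_h)+k\,|h|$ for hyperbolic tree isometries; the paper's version uses only sphere-transitivity of $s$ and the separation of $T$ by an edge. Both are valid; yours is shorter and, I think, more transparent, and it does handle the degenerate $2$-regular case as you note.
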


\begin{proof}
  Let $v$ denote the fixed point of $s$, $A_h$ denote the axis along which $h$ translates and $m$ denote the distance between $v$ and $A_h$ (possibly $m=0$). Since $h$ acts transitively on the vertices of $A_h$, it is enough to show that any vertex in $T$ can be mapped by $\langle h,s\rangle$ into $A_h$.
  Indeed, let $y\in V(T)$ be a vertex. For large enough $k$ we have that $d(h^k.y,v)>m$. Since every sphere of radius at least $m$ around $v$ intersects $A_h$, we have that some element in $\langle s\rangle$ maps $h^k.y$ into $A_h$, and so the lemma is proved.
  \end{proof}

The following is a direct corollary from the last lemma and the structure of conjugacy classes in $\autt$.
\begin{corollary} \label{cc set is vertex transitive}

  every conjugation complete subset of $\autt$ generates a vertex transitive subgroup.
\end{corollary}

\begin{proof}
  Such a set must contain hyperbolic elements of all translation lengths $l\in \{1,2,\dots\}$ and elliptic elements of all orbital types $i\in \scrot$. In particular it must contain a hyperbolic element of translation length $1$ and a spherically transitive element, therefore it satisfies the assumptions of lemma \ref{hyperbolic and spherically transitive implies transitivity}.
\end{proof}

\subsection{Proof of theorem \ref{thm-autt-tig}}
We prove the following rephrasement of theorem \ref{thm-autt-tig}.
\begin{theorem}
 Every closed \cc~ subgroup $H$ of the automorphism group $\autt$ of an $m$-regular tree
 $T$,  $(m\geq 2)$,
 coincides with $\autt$.
\end{theorem}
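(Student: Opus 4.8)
The plan is to show that a closed conjugation-complete subgroup $H \leq \autt$ must contain enough elements to apply the two generation lemmas just proved, and then bootstrap from the stabilizer of a single vertex up to the full group. By Corollary \ref{conj complete subgroup contains various elements}, since $H$ is conjugation complete it meets every conjugacy class, and in particular it meets $\scrts$, it meets $\scrh$ in a hyperbolic of translation length exactly $1$, and it meets $\scrp1$ and $\scrpn$ for every $n \geq 2$. Fix once and for all a spherically transitive $s \in H \cap \scrts$, say $v$-spherically transitive, and a hyperbolic $h \in H \cap \scrh$ with $|h| = 1$.

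\textbf{Step 1 (vertex-transitivity).} First I would apply Lemma \ref{hyperbolic and spherically transitive implies transitivity} to the pair $(h,s)$: since $h$ is hyperbolic of length $1$ and $s$ is spherically transitive, the subgroup $\langle h,s\rangle \leq H$ acts transitively on $V(T)$. Hence $H$ itself acts transitively on the vertices of $T$.

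\textbf{Step 2 (recovering a vertex stabilizer).} Next I would show $Stab(v) \leq H$. Consider the closed subgroup $K := H \cap Stab(v)$; it is closed in $\autt$ since both factors are. The goal is to verify the hypothesis of Lemma \ref{elliptic elements of all types generate the stabilizer}, namely that $K$ contains an element of type $(n,\mathcal{P})$ about $v$ for every admissible pair $(n,\mathcal{P})$. Here is the one genuinely delicate point. Conjugation completeness gives us, for each such type, \emph{some} element $g \in H$ of type $(n,\mathcal{P})$ about \emph{some} vertex $w$ — not necessarily about $v$. But by Step 1 there is $\gamma \in H$ with $\gamma.w = v$, and since conjugation transfers the orbital structure (the remark preceding Corollary \ref{conj complete subgroup contains various elements}), the conjugate $\gamma g \gamma^{-1} \in H$ is of type $(n,\mathcal{P})$ about $v$ and therefore lies in $K$. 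Thus $K$ meets the required type about $v$ for every $(n,\mathcal{P})$, and Lemma \ref{elliptic elements of all types generate the stabilizer} yields $K = Stab(v)$, i.e. $Stab(v) \leq H$.

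\textbf{Step 3 (assembling $\autt$).} Finally I would combine transitivity with the full vertex stabilizer: since $H \supseteq Stab(v)$ and $H$ acts transitively on $V(T)$, every element of $\autt$ can be written as a product of an element carrying $v$ to its image times an element of $Stab(v)$. Concretely, given any $f \in \autt$, pick $\eta \in H$ with $\eta.v = f.v$; then $\eta^{-1} f \in Stab(v) \leq H$, so $f \in H$. Hence $H = \autt$. \textbf{The main obstacle} I anticipate is precisely the alignment issue in Step 2: conjugation completeness only supplies elements of each type \emph{somewhere} in the tree, and the argument hinges on first establishing vertex-transitivity so that these can be conjugated into the stabilizer of the chosen base vertex $v$; getting the order of the steps right (transitivity before reconstructing the stabilizer) is what makes the whole scheme go through.
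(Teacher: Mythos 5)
Your proposal is correct and follows essentially the same route as the paper's own proof: establish vertex-transitivity of $H$ from a length-one hyperbolic and a spherically transitive element via Lemma \ref{hyperbolic and spherically transitive implies transitivity}, conjugate the type-$(n,\mathcal{P})$ elements supplied by Corollary \ref{conj complete subgroup contains various elements} into the stabilizer of the base vertex, apply Lemma \ref{elliptic elements of all types generate the stabilizer} to get $Stab_H(v)=Stab_{\autt}(v)$, and conclude by transitivity. The ``alignment'' point you flag as the delicate step is exactly the step the paper handles the same way.
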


\begin{proof}
 Let $H\leq \autt$ be a closed \cc~ subgroup.
 By corollary \ref{cc set is vertex transitive} $H$ is vertex-transitive.
 Furthermore, since $H$ is \cc, it contains elements of all elliptic orbital types. Let $\{r_i\}_{i\in\scrot}\subseteq H$ be a set of representatives in $H$ for all orbital types $i\in \scrot$.
Each $r_i$ is elliptic, and has a non-empty set of fixed points in $V(T)$. For every $i\in \scrot$ let $v_i$ be a fixed point of $r_i$.
Fix $v\in V(T)$ and denote by $K=Stab_{\autt}(v)$.
Since $H$ is vertex transitive, we can find elements $h_i\in H$ s.t $h_i.v_i=v$ for all $i$. Consider the collection $\mathcal{C}=\{h_ir_ih_i^{-1}\}_{i\in \scrot}\subseteq H$. As conjugation does not affect the orbital type, this collection contains elements of all orbital types in $\scrot$. Further, all elements in $\mathcal{C}$ stabilize $v$. So $\mathcal{C}$ is a \cc~ collection in $K$, with respect to conjugacy inside $K$.
By corollary \ref{AutTv is tig} $K=Aut(T,v)$ is \tig , so in particular, the collection $\mathcal{C}\subseteq H$ generates $K$, which implies that $K\leq H$.

 We obtain that $H$ is vertex  transitive and contains the vertex-stabilizer group
 $K=Stab_{\autt}(v)$.
 It follows that $H=\autt$.
\end{proof}

\section{Uncountably many countable non-\ig~-groups}

\subsection{Laws in groups}

A non-empty word $w=w\left( x_{1},\ldots ,x_{n}\right) $ (= string of
symbols in alphabet $x_{1}^{\pm 1},\ldots ,x_{n}^{\pm 1}$) is \textsf{%
reduced} if \ it does not contain adjacent symbols of the form $%
x_{i}^{\varepsilon }x_{i}^{-\varepsilon },\varepsilon =\pm 1$. A non-empty word $w=w\left(
x_{1},\ldots ,x_{n}\right) $ is \textsf{reduced on the tuple of elements} $\left( g_{1},\ldots
,g_{n}\right) $ of a group $G$ if $w$ is reduced and every word of the form $x_{i}^{m}$\ does not
occur as a subword in $w$ when $\left\vert m\right\vert \geq \left\vert g_{i}\right\vert $, where
 $|g_i|$ denotes the order of $g_i$
( we
of course assume that $\left\vert g_{i}\right\vert =\infty $ when $g_{i}^{%
\mathbb{Z}}$ is infinite cyclic).
 A tuple of elements $\left( g_{1},\ldots ,g_{n}\right) $ of a
 group $G$ is \textsf{free} if the group $\langle g_{1},\ldots ,g_{n}\rangle $ generated by them
 is a free product of cyclics $g_{i}^{\mathbb{Z}}$, i.e. the group
 $\langle g_{1},\ldots ,g_{n}\rangle $, generated by $g_{1},\ldots ,g_{n}$,
 is isomorphic to the free product $%
g_{1}^{\mathbb{Z}}\ast \cdots \ast g_{n}^{\mathbb{Z}}$ under the natural homomorphism from
$g_{1}^{\mathbb{Z}}\ast \cdots \ast g_{n}^{\mathbb{Z}}$ to
$\langle g_{1},\ldots ,g_{n}\rangle $.
 A word $w$ is said to be a \textsf{law}
 in a group $G$ if $w$ becomes trivial whatever values the arguments $x_{i}$
are assigned from $G$; i.e. $w\left( g_{1},\ldots ,g_{n}\right) =1$ for all $%
g_{i}\in G$.

The proof of the following lemma is routine and is left to the reader.

\begin{lemma}
A tuple $\left( g_{1},\ldots ,g_{n}\right) $ over a group $G$ is free \iff~ every non-empty word
$w\left( x_{1}^{\pm 1},\ldots ,x_{n}^{\pm 1}\right) $, which is reduced over the tuple $\left(
g_{1},\ldots ,g_{n}\right) $, does not vanish on \ $\left( g_{1},\ldots ,g_{n}\right) $, i.e.
$w\left( g_{1},\ldots ,g_{n}\right) \neq e$. In other words a tuple $\left( g_{1},\ldots
,g_{n}\right) $ is free \iff there are no non-trivial relations between the elements
$g_{1},\ldots ,g_{n}$.
\end{lemma}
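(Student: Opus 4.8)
The plan is to recognize this statement as a reformulation of the normal form theorem for free products of cyclic groups. Write $P := g_1^{\mathbb{Z}} \ast \cdots \ast g_n^{\mathbb{Z}}$ for the abstract free product and let $\phi\colon P \to \langle g_1,\ldots,g_n\rangle \leq G$ be the natural homomorphism sending the generator of the $i$-th factor to $g_i$; it is always surjective, and by definition the tuple $(g_1,\ldots,g_n)$ is free precisely when $\phi$ is injective. Thus the whole lemma amounts to the assertion that $\ker\phi$ is trivial if and only if no non-empty word reduced over the tuple vanishes on $(g_1,\ldots,g_n)$. The bridge between the two descriptions is the following observation: collecting the letters of a reduced word into maximal syllables $x_{i_1}^{m_1}\cdots x_{i_k}^{m_k}$ (so $i_j\neq i_{j+1}$ and $m_j\neq 0$), the additional ``reduced over the tuple'' condition $|m_j| < |g_{i_j}|$ is exactly what guarantees that each $x_{i_j}^{m_j}$ represents a \emph{non-trivial} element of the cyclic factor $g_{i_j}^{\mathbb{Z}}$. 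In other words, the non-empty words reduced over the tuple represent exactly the non-trivial elements of $P$, each arising from a normal form.

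With this dictionary the two implications are immediate. First I would assume the tuple is free, so $\phi$ is injective, and take any non-empty word $w$ reduced over the tuple. By the observation above its image $\bar w \in P$ is a non-trivial normal form, hence $\bar w \neq e$ by the uniqueness of normal forms in a free product; injectivity of $\phi$ then gives $w(g_1,\ldots,g_n) = \phi(\bar w) \neq e$. Conversely, assuming no such word vanishes, I would show $\ker\phi = \{e\}$: given any $p \neq e$ in $P$, its normal form is a non-trivial alternating product $x_{i_1}^{m_1}\cdots x_{i_k}^{m_k}$ of non-trivial factor elements, and choosing for each syllable a representative exponent with $0 < |m_j| < |g_{i_j}|$ yields a non-empty word $w$ reduced over the tuple with $\bar w = p$. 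By hypothesis $\phi(p) = w(g_1,\ldots,g_n) \neq e$, so $\phi$ is injective and the tuple is free. The closing ``no non-trivial relations'' phrasing is then just the contrapositive reading of the same equivalence.

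The only substantive ingredient --- and hence the step I would flag as the heart of the matter --- is the uniqueness of normal forms in a free product (equivalently, that a reduced alternating product of non-trivial syllables is never equal to the identity in $P$). This is the classical normal form theorem for free products; rather than reprove it I would simply cite a standard reference (e.g.\ Magnus--Karrass--Solitar), which is why the authors can fairly call the argument routine. One should only be mildly careful about the finite-order factors, verifying that an exponent $m_j$ with $|m_j| < |g_{i_j}|$ and $m_j \neq 0$ is genuinely non-zero modulo the order $|g_{i_j}|$, so that the corresponding syllable is non-trivial in $g_{i_j}^{\mathbb{Z}} \cong \mathbb{Z}/|g_{i_j}|$; this is exactly how the ``reduced over the tuple'' hypothesis has been engineered.
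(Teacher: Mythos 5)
The paper offers no proof of this lemma --- it is explicitly declared routine and left to the reader --- and your argument is precisely the standard one the authors intend: identify ``the tuple is free'' with injectivity of the natural map from $g_1^{\mathbb{Z}}\ast\cdots\ast g_n^{\mathbb{Z}}$, observe that the ``reduced over the tuple'' condition makes non-empty reduced words correspond exactly to non-trivial normal forms, and invoke the normal form theorem for free products. Your proof is correct and complete, including the right attention to the finite-order factors.
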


More generally, consider a free product $G[x]=G\ast x^{\mathbb{Z}}$ of $G$ and the infinite cyclic
group $x^{\mathbb{Z}}$. An element $w\in G[x]$ will be called a \textsf{monomial in variable} $x$\
\textsf{over the group} $G$. In case $w\not\in G$ we say that $w$ is \textsf{non-constant}. A
monomial $w$ induces a map $w:G\rightarrow G$, where $w(g)$ is the image of $w$ under the natural
group homomorphism
\begin{equation*}
G[x]\rightarrow G,
\end{equation*}%
taking $x$ to $g$ and fixing $G$ pointwise. The subset $V(w)=\{(g{\in
G:w(g)=1\}}$ is called the \textsf{principal algebraic set }of $G$\textsf{\ }%
corresponding to $w$. Every monomial $w\in G[x]$ can be written as a word $%
w=a_{0}x^{l_{1}}a_{1}x^{l_{2}}\cdots a_{m-1}x^{l_{m}}a_{m}$ with $a_{i}$-s in $G$. We say that this
word expression is \textsf{reduced over }$G$ if it does not contain a subword $x_{{}}^{\pm
1}ax_{{}}^{\mp 1}$ with $a$ in the center of $G$. If $a$ is central then replacing $x_{{}}^{\pm
1}ax_{{}}^{\mp 1}$ by $a$ does not change the principal algebraic set $V\left( w\right) $.\ After
finitely many such replacements any word $w$ can be brought to a reduced form (which, of course,
could occur to be empty or constant). A (non-empty) reduced non-constant word $w\left( x\right)
=a_{0}x^{l_{1}}a_{1}x^{l_{2}}\cdots a_{m-1}x^{l_{m}}a_{m}$ over $G$ is said to be a
\textsf{generalized law} in a group $G$ if $w$ becomes trivial whatever values the arguments $x$
are assigned from $G$; i.e. $w\left( g\right) =1$ for all $g\in G$.

The following is the simplified version of the result by Mikhalev and
Golubchik, showing that there is no generalized law in $\mathrm{GL}%
_{m}\left( K\right) ,\left( m\geq 2\right) $ in case of infinite field $K$.

\begin{theorem}[\protect\cite{gm}]
\label{thm-MG-1-variable}Let $K$ be an infinite field and $w\left( x\right)
=g_{0}x^{i_{0}}g_{1}x^{i_{1}}\cdots g_{m}x^{i_{m}}g_{m+1}$ be a monomial
over the group $\mathrm{GL}_{m}\left( K\right) ,\left( m\geq 2\right) $. If $%
g_{1},\ldots ,g_{m}$ are all non-central in $\mathrm{GL}_{m}\left( K\right) $ then the principal
algebraic set $V\left( w\right) =\left\{ g\in
GL_{m}\left( K\right) :w\left( g\right) =1\right\} $ is a proper subset of $%
GL_{m}\left( K\right) $.
\end{theorem}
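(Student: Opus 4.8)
The plan is to prove the stronger statement that $w$ is not a generalized law, i.e. that $w(g)\neq 1$ for at least one $g$, and to begin by reducing to the \ac case. First I would note that $V(w)$ is Zariski closed in $\mathrm{GL}_m(K)$: clearing the determinant turns every entry of $w(g)-1$ into a polynomial in the entries of $g$. Since $K$ is infinite, $\mathrm{GL}_m(K)$ is Zariski dense in $\mathrm{GL}_m(\overK)$, so if $V(w)$ were all of $\mathrm{GL}_m(K)$ the closed set $\{g\in \mathrm{GL}_m(\overK):w(g)=1\}$ would contain a dense subset and hence equal $\mathrm{GL}_m(\overK)$; conversely, if $w\not\equiv 1$ on $\mathrm{GL}_m(\overK)$ its vanishing locus is a proper closed subvariety whose complement is a nonempty open subset of the irreducible variety $\mathrm{GL}_m(\overK)$ and therefore meets the dense set of $K$-points. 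So it suffices to exhibit $g\in \mathrm{GL}_m(\overK)$ with $w(g)\neq 1$; from here on I assume $K$ is \ac and, after the reduction to reduced form described earlier in the text, that $g_1,\dots,g_m$ are non-scalar and $i_0,\dots,i_m$ are nonzero integers.

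The core computation I would use is a unipotent substitution. Fix $i\neq j$, let $E_{ij}$ be the matrix unit, and put $x=I+sE_{ij}$; since $\det x=1$ this lies in $\mathrm{GL}_m(\overK)$ for every scalar $s$, and because $E_{ij}^2=0$ one has $x^{i_k}=I+i_k\,sE_{ij}$ exactly. Thus $w(x)$ is a polynomial in $s$ of degree at most $m+1$, and if $w\equiv 1$ then every coefficient of $w(x)-I$ vanishes. Collapsing the $m+1$ matrix units that flank the interior constants by means of $E_{ij}\,a\,E_{ij}=a_{ji}E_{ij}$, the coefficient of $s^{m+1}$ equals
\[
\Big(\prod_{k=0}^{m}i_k\Big)\Big(\prod_{k=1}^{m}(g_k)_{ji}\Big)\,g_0E_{ij}g_{m+1},
\]
and $g_0E_{ij}g_{m+1}$, being the product of the $i$-th column of $g_0$ with the $j$-th row of $g_{m+1}$, is a nonzero rank-one matrix because $g_0,g_{m+1}$ are invertible.

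Next I would bring in conjugation to make non-centrality bite. If $w\equiv 1$, then replacing $x$ by $PxP^{-1}$ shows that the word with interior constants $P^{-1}g_kP$ (and suitably adjusted invertible outer constants) is again identically $1$; in characteristic $0$ the integer $\prod_k i_k$ is nonzero in $K$, so the vanishing of the top coefficient forces
\[
\prod_{k=1}^{m}(P^{-1}g_kP)_{ji}=0\qquad\text{for all }P\in \mathrm{GL}_m(\overK).
\]
But $P\mapsto\prod_k (P^{-1}g_kP)_{ji}$ is a regular function on the irreducible variety $\mathrm{GL}_m(\overK)$, whose coordinate ring is a domain, so the product can vanish identically only if some factor does; and $(P^{-1}g_kP)_{ji}\equiv 0$ would say that every conjugate of $g_k$ has zero $(j,i)$-entry, which forces $g_k$ to be scalar. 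Since each $g_k$ is non-central, no factor vanishes identically, the product is a nonzero regular function, and it is therefore nonzero at some $P$ — a contradiction showing $w\not\equiv 1$.

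The hard part will be positive characteristic. If $\mathrm{char}\,K=p$ and $p\mid i_k$ for some $k$, then $\prod_k i_k=0$ in $K$ and the computation above yields nothing; worse, a unipotent has $p$-power order and simply cannot detect an exponent divisible by $p$, so a semisimple input is forced. The remedy I would pursue is to substitute a regular semisimple $x=P\,\mathrm{diag}(\lambda_1,\dots,\lambda_m)\,P^{-1}$ and read $w(x)=I$ as a Laurent-polynomial identity in $\lambda_1,\dots,\lambda_m$, legitimate since $\overK$ is infinite. The obstacle is that several index sequences may feed the same Laurent monomial, so one cannot extract a single coefficient cleanly; the technical heart is to single out a suitably extreme monomial whose coefficient — a sum of products of entries of the $P^{-1}g_kP$ together with entries of the invertible outer constants — is nonzero for an appropriate $P$. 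Exactly as in the unipotent case, this final nonvanishing is where the non-centrality of $g_1,\dots,g_m$ enters, and establishing it is the crux of the Golubchik--Mikhalev argument.
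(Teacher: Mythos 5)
The paper does not actually prove this theorem: it is quoted from Golubchik--Mikhalev \cite{gm} and used as a black box, so there is no in-paper argument to compare yours against. Judged on its own, your argument for the case $\mathrm{char}\,K=0$ (and more generally whenever $\prod_k i_k\neq 0$ in $K$) is correct and complete: the reduction to $\overline{K}$ by Zariski density is sound, the substitution $x=I+sE_{ij}$ with $E_{ij}^2=0$ does give $x^{i_k}=I+i_k sE_{ij}$ exactly (also for negative exponents), the top coefficient collapses via $E_{ij}aE_{ij}=a_{ji}E_{ij}$ as you say, and the conjugation-plus-irreducibility step correctly converts non-centrality of each $g_k$ into nonvanishing of the regular function $P\mapsto\prod_k(P^{-1}g_kP)_{ji}$ (a matrix all of whose conjugates have zero $(j,i)$-entry is forced, via permutation and elementary conjugations, to be scalar). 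This is a clean, self-contained proof of that case.

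The genuine gap is positive characteristic. The theorem is asserted for an arbitrary infinite field, and the paper really does need that generality: the final section explicitly says the non-\textsf{IG} construction ``can be carried over when the prime field is finite,'' i.e.\ over fields of infinite transcendence degree over $\mathbb{F}_p$. When $p$ divides some exponent $i_k$, your unipotent substitution is structurally blind (as you note, $x^{i_k}=I$ then), and your proposed replacement --- a regular semisimple substitution and the extraction of an ``extreme'' Laurent monomial whose coefficient is shown nonzero for suitable $P$ --- is only a plan: you state yourself that establishing the nonvanishing of that coefficient ``is the crux of the Golubchik--Mikhalev argument'' and do not carry it out. The difficulty is real, because distinct index sequences contribute to the same Laurent monomial and cancellation must be ruled out; nothing in your write-up does this. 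So as it stands the proposal proves a weaker theorem (infinite fields with $\prod_k i_k$ invertible, in particular characteristic $0$), which suffices for the paper's uncountable family $\mathrm{PSL}_m(\mathbb{Q}(\sqrt{R},X))$ but not for the full statement or for the finite-prime-field variant the authors invoke. Either complete the semisimple argument or restrict the statement and the downstream applications to characteristic $0$.
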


The same result (with the same proof) is true for  the groups $\mathrm{SL}%
_{m}(K)$ and $\mathrm{PSL}_{m}(K)$.\bigskip

\subsection{Rational points}

As in \cite{borel-free} we identify implicitly an algebraic group $G$ with
the group $G\left( \Omega \right) $ of its points in a "universal domain" $%
\Omega $, i.e. an algebraically closed extension of infinite transcendence degree over a prime
field (recall that the \textsf{prime field }is either
the rational number field $\mathbb{Q}$, or a finite field of prime order $%
\mathbb{F}_{p}$).

Assume now that $G$ \ is defined over a field $K$ of infinite transcendence degree over a prime
field $k$. We shall need the following lemma which guaranties that, under some natural conditions,
given a family of proper algebraic subsets \ $V_{i}$ $\left( i\in \mathbb{N}\right) $ of $G$, the
set of $K$-rational points \ outside of a countable union \ $\bigcup V_{i}$ $\left( i\in
\mathbb{N}\right) $ is nonempty. (This is kind of an algebraic analog of Baire category theorem).

\begin{lemma}[\protect\cite{borel-free}, \S 2,Lemma 2]
\label{borel-lemma}Let $K$ be a field of infinite transcendence degree over a prime field. Let $X$
\ be an irreducible unirational $K$-variety. Let $L$
be a finitely generated subfield of $K$ containing a field of definition of $%
X$, and $V_{i}$ $\left( i\in \mathbb{N}\right) $ a family of proper algebraic subsets of $X$
defined over an algebraic closure $\overline{L}$ of $L$. Then $X\left( K\right) $ is not contained
in the union of the \ $V_{i}$ $\left( i\in \mathbb{N}\right) $.
\end{lemma}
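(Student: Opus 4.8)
The plan is to reduce the assertion to the analogous statement for affine space by means of a unirational parametrisation, and then to produce a single $K$-point that is ``generic'' over $\overline{L}$ and hence automatically lies outside every proper $\overline{L}$-subvariety; countability of the family will play no essential role. First I would fix the parametrisation. Since $X$ is unirational over $K$, there is a dominant rational map $\phi\colon \mathbb{A}^{n}\dashrightarrow X$, defined on a dense open $U\subseteq \mathbb{A}^{n}$, whose finitely many coefficients lie in $K$. Adjoining these coefficients to $L$ replaces $L$ by a larger finitely generated subfield of $K$; this is harmless, because the $V_{i}$ remain defined over the algebraic closure of the enlarged field and the conclusion ``$X(K)\not\subseteq\bigcup_{i}V_{i}$'' does not refer to $L$ at all. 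So I may assume $\phi$, and hence its domain $U$, are defined over $L$.

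Next I pull the $V_{i}$ back. For each $i$ set $W_{i}=\phi^{-1}(V_{i})\cap U$. Because $\phi$ is dominant its image is dense in $X$; were some $W_{i}$ equal to all of $U$, then $\phi(U)\subseteq V_{i}$, and taking closures would give $X=\overline{\phi(U)}\subseteq V_{i}$, contradicting that $V_{i}$ is proper. Hence each $W_{i}$ is a proper closed subset of $U$, defined over $L\cdot\overline{L}=\overline{L}$, and the indeterminacy locus $Z:=\mathbb{A}^{n}\setminus U$ is likewise a proper closed subset of $\mathbb{A}^{n}$, defined over $L$. In particular each of $Z$ and the $W_{i}$ is contained in the zero set of a single nonzero polynomial with coefficients in $\overline{L}$. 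It now suffices to find a point $a=(a_{1},\dots,a_{n})\in K^{n}$ lying in $U$ and avoiding every $W_{i}$: since $\phi$ has coefficients in $L\subseteq K$, the image $\phi(a)$ is then a point of $X(K)$, and $a\notin\phi^{-1}(V_{i})$ forces $\phi(a)\notin V_{i}$ for all $i$, which is exactly what we want.

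To produce such an $a$ it is enough to choose $a_{1},\dots,a_{n}\in K$ algebraically independent over $\overline{L}$: then no nonzero polynomial over $\overline{L}$ can vanish at $a$, so $a$ simultaneously misses $Z$ and every $W_{i}$ (and, I note, this would handle an arbitrary, not merely countable, family). The existence of such $a$ is the only place where the transcendence hypotheses are used. Indeed, $L$ is finitely generated over the prime field $F_{0}$, so $\overline{L}$ has finite transcendence degree $d$ over $F_{0}$; from $\mathrm{trdeg}(K\overline{L}/F_{0})\geq\mathrm{trdeg}(K/F_{0})=\infty$ and additivity of transcendence degree one obtains $\mathrm{trdeg}(K\overline{L}/\overline{L})=\infty$. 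Since $K$ generates $K\overline{L}$ over $\overline{L}$, a transcendence basis of $K\overline{L}$ over $\overline{L}$ can be extracted from $K$ itself, which yields the required $n$ (in fact infinitely many) elements of $K$ that are algebraically independent over $\overline{L}$.

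The step I expect to be the main obstacle is bookkeeping rather than conceptual: making sure the unirational parametrisation descends to a finitely generated field of definition \emph{without} disturbing that the $V_{i}$ are still cut out over $\overline{L}$, and then running the additivity-of-transcendence-degree computation cleanly enough to guarantee a $K$-point independent over $\overline{L}$ (one must take care that $\overline{L}$ need not sit inside $K$, which is why the argument passes through the compositum $K\overline{L}$). By contrast, the geometric heart of the proof — that a dominant rational map cannot carry a dense open set into a proper closed subvariety — is immediate.
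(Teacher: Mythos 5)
The paper does not actually prove this lemma: it is quoted verbatim from Borel \cite{borel-free}, \S 2, Lemma~2 and used as a black box, so there is no in-paper argument to compare against. Your proof is correct and is essentially Borel's original one: pull the $V_i$ back along a unirational parametrisation defined over a suitably enlarged (still finitely generated) $L$, and evaluate at a point of $K^n$ whose coordinates are algebraically independent over $\overline{L}$ --- such a point exists because $\mathrm{trdeg}(K/L)$ is infinite, and it automatically avoids every proper subvariety defined over $\overline{L}$, so countability is indeed never needed.
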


\subsection{Building up free tuples}

\begin{lemma}[main]
\label{main-lemma}Let $K$ be a field of infinite transcendence degree over a prime field $k$. $\
$Let $C_{1},\ldots ,C_{n}$ be non-trivial distinct conjugacy classes of the group
$G=\mathrm{PSL}_{m}\left( K\right)$,  where $n,m\geq 2$. Suppose that $c_{1}\in C_{1},\ldots
,c_{n-1}\in C_{n-1}$ are such that $\left( c_{1},\ldots ,c_{n-1}\right) $ is a free tuple. Then
there is a representative $c_{n}\in C_{n}$ such that the tuple $\left( c_{1},\ldots ,c_{n}\right) $
is free.
\end{lemma}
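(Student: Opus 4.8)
The plan is to seek $c_n$ of the form $g c^{0} g^{-1}$, where $c^{0}\in C_n$ is a fixed representative and $g$ ranges over the group, ruling out one potential non-trivial relation at a time. Two structural facts guide the argument. First, since $K$ is infinite and $m\geq 2$, the group $G=\mathrm{PSL}_m(K)$ is simple and hence centerless, so that ``non-central'' and ``non-trivial'' mean the same thing in $G$. Second, conjugate elements share the same order, so $|c_n|$ is determined by the class $C_n$ and not by the chosen representative; therefore the set of words $w(x_1,\dots,x_n)$ reduced over the prospective tuple $(c_1,\dots,c_n)$ depends only on $C_1,\dots,C_n$, and there are only countably many of them. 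By the freeness criterion recalled above it is enough to produce one $c_n\in C_n$ on which no non-trivial reduced word \emph{involving} $x_n$ vanishes, since the words not involving $x_n$ are handled already by the freeness of $(c_1,\dots,c_{n-1})$.

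Fix such a word $w$ and substitute $x_i=c_i$ for $i<n$, absorbing each maximal $x_n$-free block into an element of $H=\langle c_1,\dots,c_{n-1}\rangle$. This exhibits $w$ as a monomial $w_{*}(x_n)=a_0\,x_n^{k_1}a_1\cdots a_{r-1}x_n^{k_r}a_r$ over $G$ with $r\geq 1$, in which each interior block $a_i\ (1\leq i\leq r-1)$ is a non-empty word in $x_1,\dots,x_{n-1}$ that is reduced over $(c_1,\dots,c_{n-1})$, and in which $0<|k_j|<|c^{0}|$ for every $j$. Substituting the further $x_n=g c^{0}g^{-1}$ turns the relation $w_{*}(c_n)=1$ into $\tilde w(g)=1$, where
\[
\tilde w(g)=a_0\,g\,(c^{0})^{k_1}g^{-1}a_1\,g\,(c^{0})^{k_2}g^{-1}\cdots a_{r-1}\,g\,(c^{0})^{k_r}g^{-1}a_r
\]
is a monomial in the single variable $g$ over $G$. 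All exponents of $g$ are $\pm 1$, and the interior coefficients are precisely $(c^{0})^{k_1},a_1,(c^{0})^{k_2},\dots,a_{r-1},(c^{0})^{k_r}$. Each of these is non-trivial: the $a_i$ do not vanish because a non-empty word reduced over the free tuple $(c_1,\dots,c_{n-1})$ cannot evaluate to $1$, while $(c^{0})^{k_j}\neq 1$ because $0<|k_j|<|c^{0}|$. As $G$ is centerless, these interior coefficients are non-central, so Theorem \ref{thm-MG-1-variable} (in its $\mathrm{PSL}_m$ form) applies and shows that $\{h\in G:\tilde w(h)=1\}$ is a proper algebraic subset of $G$.

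It remains to assemble the pieces. Conjugation by $\mathrm{PSL}_m(K)$ is induced by $\mathrm{SL}_m(K)$, so I let the conjugator $g$ range over the rational---hence irreducible unirational---variety $X=\mathrm{SL}_m$, defined over the prime field. Pulling the proper subset above back along $\mathrm{SL}_m\to G$ produces, for each admissible $w$, a proper algebraic subset $V_w\subset \mathrm{SL}_m$, and these are defined over $\overline L$ for any finitely generated subfield $L\subseteq K$ containing a field of definition of $X$ together with the matrix entries of $c_1,\dots,c_{n-1},c^{0}$. Since there are only countably many such $w$, Lemma \ref{borel-lemma} yields a $K$-point $g\in \mathrm{SL}_m(K)$ lying outside every $V_w$; the element $c_n=g c^{0}g^{-1}\in C_n$ then makes $(c_1,\dots,c_n)$ free.

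The step I expect to be most delicate is verifying that \emph{all} interior coefficients of $\tilde w$ are non-trivial, as this is exactly where the three essential inputs enter together: the freeness of the initial tuple (forcing the $a_i$ to be non-trivial), the order bound $|k_j|<|c^{0}|$ coming from reducedness of $w$ over the tuple (forcing $(c^{0})^{k_j}\neq 1$), and the simplicity of $\mathrm{PSL}_m(K)$, without which ``non-trivial'' could not be upgraded to the ``non-central'' hypothesis required by Mikhalev--Golubchik.
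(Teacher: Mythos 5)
Your proposal is correct and follows essentially the same route as the paper: fix a representative of $C_n$, conjugate by a variable $g$, convert each reduced word into a one-variable monomial over $G$ whose vanishing set is a proper algebraic subset by the Mikhalev--Golubchik theorem, and then apply Borel's lemma on rational points of a unirational variety to find a $K$-rational $g$ avoiding the countable union. Your explicit verification that the interior coefficients are non-central (via freeness of $(c_1,\dots,c_{n-1})$, the order bound on the exponents, and centerlessness of $\mathrm{PSL}_m(K)$) is a point the paper passes over more quickly, and your use of $\mathrm{SL}_m$ rather than $\mathrm{PSL}_m$ as the unirational variety is an inessential variant.
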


\begin{proof}
\ \ \ \ \ Suppose  that $c_{1},\ldots ,c_{n-1}$ are given and we have to
construct $c_{n}$. Fix some $c\in C_{n}$. We seek $g\in \mathrm{PSL}%
_{m}\left( K\right) $ such that the $n$-tuple $\left( c_{1},\ldots ,c_{n}\right) $ is free with
$c_{n}=g^{-1}cg$. We will first find out $g$ in a larger group $G\left( \Omega \right)
=\mathrm{PSL}_{m}\left( \Omega \right) $ and then apply Borel's lemma. Let $W$ be the set of all
non-empty
words $w\left( x_{1},\ldots ,x_{n}\right) $ which are reduced on the tuple $%
\left( c_{1},\ldots ,c_{n-1},c\right) $. Thus $W=\{w=w\left( x_{1},\ldots
,x_{n}\right) :$ $w$ is reduced,$\ x_{i}^{m}$ does not occur in $w$ when $%
\left\vert m\right\vert \geq \left\vert c_{i}\right\vert $ and $x_{n}^{m}$ does not occur in $w$
when $\left\vert m\right\vert \geq \left\vert c\right\vert $ $\}$.

Let $w=w\left( x_{1},\ldots ,x_{n}\right) \in W$. The replacement $%
x_{1}\mapsto c_{1},\ldots ,x_{n-1}\mapsto c_{n-1},x_{n}\mapsto x^{-1}cx$ gives rise to the monomial
$w\left( c_{1},\ldots ,c_{n-1},x^{-1}cx\right) $ over $G$ in variable $x$, which is not necessarily
reduced over $G$. Indeed, for every maximal occurrence of $x_{n}^{m}$ in $w$ the result of
substitution $x_{n}\mapsto x^{-1}cx$ contains a monomial $\left( x^{-1}cx\right) ^{m}$ which is
clearly not reduced. The replacement $\left( x^{-1}cx\right) ^{m}$ by $x^{-1}c^{m}x$ is a non-empty
monomial $w\left( c_{1},\ldots ,c_{n-1},x^{-1}cx\right) _{red}$ with variable $x$ over $G$ and this
monomial is reduced over $G$. The set
\begin{eqnarray}
G(w) &=&\left\{ g\in G\left( \Omega \right) :w\left( c_{1},\ldots
,c_{n-1},g^{-1}cg\right) =1\right\}  \\
&=&\left\{ g\in G\left( \Omega \right) :w\left( c_{1},\ldots ,c_{n-1},g^{-1}cg\right)
_{red}=1\right\}
\end{eqnarray}

is an algebraic subset of $G\left( \Omega \right) $ given by $w\left( c_{1},\ldots
,c_{n-1},x^{-1}cx\right) _{red}=1$. It is defined over the field $L=k\left( c_{1},\ldots
,c_{n-1},c\right) $ which is finitely generated over the prime subfield $k$ and contains the field
of definition $k $ of $G$. The irreducible components $G\left( w\right) _{1},\ldots ,G\left(
w\right) _{n_{w}}$of $G\left( w\right) $ are defined over the separable closure
 $L_{s}$ \cite[\S 12.3]{borel-free}.
 Each $G\left( w\right) $ is a proper subset of $G\left( \Omega
\right) $ since otherwise $w\left( c_{1},\ldots
,c_{n-1},x^{-1}cx\right) _{red}=1$ is a non-empty generalized law on $G=%
\mathrm{PSL}_{m}\left( \Omega \right) $, which contradicts Theorem \ref%
{thm-MG-1-variable}. The family of algebraic subsets $\left\{ G\left( w\right) _{i}:w\in W\right\}
$ is countable. Finally, the group $G$, being reductive, is unirational over $k$ \cite[section
18.2]{borel-lga}. All above
means that we can apply Borel's lemma \ref{borel-lemma} with $X=G$, $%
L=k\left( c_{1},\ldots ,c_{n-1},c\right) $ and $\left( V_{i}\right) $ the family of all irreducible
components of all $G\left( w\right) ,w\in W$.

We conclude that $G\left( K\right) $ is not contained in the union $\bigcup _{w\in W}G\left( w\right)
$.   The complement
\begin{equation}
G\left( \Omega \right) -\underset{w\in W}{\bigcup }G(w)
\end{equation}%
consists of $g\in G\left( \Omega \right) $ such that the $n$-tuple $\left( c_{1},\ldots
,c_{n-1},g^{-1}cg\right) $ is free.$\ $Hence $G\left( K\right) $ contains a point $g$ outside of
union $\bigcup _{w\in W}G\left( w\right) $, so the corresponding tuple $\left( c_{1},\ldots
,c_{n-1},g^{-1}cg\right) $ is free and $K$-rational.
\end{proof}

\subsection{When is $\mathrm{PSL}_{m}\left( K\right) $ an \ig $?$}

\begin{theorem}
If a field $K$ is a countable and has infinite \ transcendence degree over the prime field, then
the group\ $\mathrm{PSL}_{m}\left( K\right) $ is not \ig~ for $m\geq 2$.
\end{theorem}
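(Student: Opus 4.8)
The plan is to violate condition \textbf{III} of Lemma \ref{l-4-equivalences} in its discrete form: I will exhibit a \cc~ subset $S$ of $G := \mathrm{PSL}_m(K)$ whose generated subgroup $\langle S\rangle$ is proper. The engine of the construction is the main Lemma \ref{main-lemma}, which allows me to extend a free tuple by one representative drawn from any prescribed further conjugacy class. Since $K$ is assumed to have infinite transcendence degree, its hypotheses are available throughout.

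First I would use countability of $K$: the group $G$ is then countable, so it has only countably many non-trivial conjugacy classes, and I fix an enumeration $C_1, C_2, C_3, \dots$ of them (all distinct and non-trivial). I then build, by induction on $n$, elements $c_n \in C_n$ such that every initial segment $(c_1, \dots, c_n)$ is a free tuple. For the base case, any $c_1 \in C_1$ is non-trivial, so $\langle c_1\rangle = c_1^{\mathbb{Z}}$ is trivially a free product of one cyclic group and $(c_1)$ is free. For the inductive step, given a free tuple $(c_1, \dots, c_{n-1})$ with $c_i \in C_i$, I apply Lemma \ref{main-lemma} to the distinct non-trivial classes $C_1, \dots, C_n$ (here $n \geq 2$ and $m \geq 2$, exactly as the lemma requires) to obtain a representative $c_n \in C_n$ for which $(c_1, \dots, c_n)$ is again free.

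Now set $S = \{c_1, c_2, \dots\}$. By construction $S$ contains a representative of every non-trivial conjugacy class, so $S$ is \cc~. I claim that the whole family $(c_i)_{i \geq 1}$ is free, i.e. $\langle S\rangle$ is the free product $\ast_i\, c_i^{\mathbb{Z}}$. This is because freeness is a finitary condition: by the lemma characterizing free tuples it suffices to check that no non-empty word reduced over $(c_i)$ vanishes, and any such word involves only finitely many of the $c_i$, hence lies inside some initial segment $(c_1, \dots, c_N)$, which is free by construction. Consequently $\langle S\rangle$ is a free product of non-trivial cyclic groups, and its abelianization is $\bigoplus_i c_i^{\mathbb{Z}} \neq 1$; in particular $\langle S\rangle$ is not perfect.

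Finally, since $K$ is infinite, $G = \mathrm{PSL}_m(K)$ with $m \geq 2$ is non-abelian and perfect (indeed simple), so its abelianization is trivial. Comparing abelianizations gives $\langle S\rangle \neq G$, so $S$ is a \cc~ subset that does not generate $G$, contradicting condition \textbf{III}; hence $G$ is not \ig~. The only step that requires genuine work is the inductive extension of free tuples, which is precisely the content of the already-established main Lemma \ref{main-lemma}. The remaining ingredients --- the enumeration of conjugacy classes, the passage from finite to infinite freeness, and the perfectness obstruction --- are routine, so I expect no serious difficulty beyond correctly feeding the distinct classes $C_1,\dots,C_n$ into the main lemma at each stage.
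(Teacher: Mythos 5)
Your proposal is correct and follows essentially the same route as the paper: enumerate the (countably many) non-trivial conjugacy classes, inductively apply Lemma \ref{main-lemma} to pick representatives forming a free tuple at every finite stage, and conclude that the resulting conjugation complete set generates a proper subgroup because a non-trivial free product of cyclic groups has non-trivial abelianization while $\mathrm{PSL}_m(K)$ is perfect. The only addition is your explicit remark that freeness is a finitary condition, which the paper leaves implicit; otherwise the two arguments coincide.
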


\begin{proof}
The group $G=\mathrm{PSL}_{m}\left( K\right) $ is countable. Then $%
G-\{e\}=\cup C_{i},\left( i\in \mathbb{N}\right) $ , where $C_{i}$ are all
non-trivial distinct conjugacy classes. Inductively applying Lemma \ref%
{main-lemma}, we find out representatives $c_{i}$ so that the subgroup $%
\langle c_{i}:i\in \mathbb{N}\rangle $\ is isomorphic to the free product $%
C=\ast \left\{ c_{i}^{\mathbb{Z}}:i\in \mathbb{N}\right\} $. By
construction, $C$ meets all conjugacy classes and is non-trivial. The set $%
S=\left\{ c_{i}\right\} $ is conjugation complete. Moreover, $\langle c_{i}:i\in \mathbb{N}\rangle
$ is a proper subgroup of $G$ because its abelianization is $C^{ab}=\prod_{i}\left(
\mathbb{Z}/\left\vert c_{i}\right\vert \right) ^{\mathbb{N}}$ (where $\left\vert c_{i}\right\vert
=0$ if $c_{i}$ is of infinite order ) \ is non-trivial whereas $G$ is a perfect group, even simple.
\end{proof}

\begin{theorem}
There are uncountably many isomorphism types of non-\ig~ groups of the form $%
\mathrm{PSL}_{m}\left( K\right) $, where $m\geq 2$ and $K$ is a countable field of infinite transcendence degree over $\mathbb{Q}$.
\end{theorem}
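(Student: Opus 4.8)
The plan is to fix once and for all an integer $m\geq 2$ and to produce a family $\{K_S\}$, indexed by subsets $S$ of the set of rational primes, of pairwise non-isomorphic countable fields of infinite transcendence degree over $\mathbb{Q}$. Since the prime field in characteristic $0$ is $\mathbb{Q}$, the preceding theorem applies to each $K_S$ and shows that every group $\mathrm{PSL}_m(K_S)$ is non-\ig~. Thus it suffices to arrange that the abstract groups $\mathrm{PSL}_m(K_S)$ are themselves pairwise non-isomorphic: the map $S\mapsto\mathrm{PSL}_m(K_S)$ will then carry the $2^{\aleph_0}$ field isomorphism types to $2^{\aleph_0}$ group isomorphism types, all non-\ig~, which is exactly what is claimed.

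First I would construct the fields. For a set $S$ of rational primes put $F_S=\mathbb{Q}(\sqrt{p}:p\in S)$ and $K_S=F_S(t_1,t_2,\ldots)$, where $t_1,t_2,\ldots$ are algebraically independent over $\overline{\mathbb{Q}}$. Each $K_S$ is countable and has infinite transcendence degree over $\mathbb{Q}$. The point is that the relative algebraic closure of $\mathbb{Q}$ in $K_S$ is precisely $F_S$, since passing to a purely transcendental extension of $F_S$ adjoins no element algebraic over $F_S$. This relative algebraic closure is an isomorphism invariant of the field (a field isomorphism restricts to the identity on the prime field $\mathbb{Q}$ and maps algebraic elements to algebraic elements), so $K_S\cong K_{S'}$ forces $F_S\cong F_{S'}$. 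By the standard theory of multiquadratic extensions one has $\sqrt{p}\in F_S$ if and only if $p\in S$; and any field isomorphism $F_S\to F_{S'}$ fixes $\mathbb{Q}$ and sends $\sqrt{p}$ to $\pm\sqrt{p}$, whence $\sqrt{p}\in F_{S'}$ exactly when $\sqrt{p}\in F_S$. Therefore $S=S'$, and the $K_S$ are pairwise non-isomorphic, giving $2^{\aleph_0}$ isomorphism types.

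The main obstacle — and the step for which I would invoke classical results rather than argue from scratch — is the passage from fields to groups, namely that $\mathrm{PSL}_m(K_S)\cong\mathrm{PSL}_m(K_{S'})$ implies $K_S\cong K_{S'}$. This is the assertion that the field $K$ is recoverable from the abstract group $\mathrm{PSL}_m(K)$, a consequence of the fundamental theorem of projective geometry together with the classical determination of isomorphisms between projective special linear groups (Schreier and van der Waerden in the case $m=2$, and Dieudonn\'e for $m\geq 3$). All the sporadic coincidences between such groups occur only over finite fields; since our fields $K_S$ are infinite (in particular they have more than three elements), no exceptional isomorphism intervenes, and one gets $\mathrm{PSL}_m(K_S)\cong\mathrm{PSL}_m(K_{S'})$ if and only if $K_S\cong K_{S'}$.

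Putting the pieces together, the $2^{\aleph_0}$ pairwise non-isomorphic fields $K_S$ yield $2^{\aleph_0}$ pairwise non-isomorphic groups $\mathrm{PSL}_m(K_S)$, each countable and, by the preceding theorem, non-\ig~. This exhibits uncountably many isomorphism types of non-\ig~ groups of the form $\mathrm{PSL}_m(K)$ with $m\geq 2$ and $K$ countable of infinite transcendence degree over $\mathbb{Q}$, as required.
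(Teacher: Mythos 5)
Your proposal is correct and follows essentially the same route as the paper: the same fields $\mathbb{Q}(\sqrt{p}:p\in S)(t_1,t_2,\dots)$ indexed by sets of primes, and the same appeal to the Schreier--van der Waerden classification of isomorphisms between projective special linear groups to pass from non-isomorphic fields to non-isomorphic groups. The only difference is that you spell out (via the relative algebraic closure of $\mathbb{Q}$ being an isomorphism invariant) the step the paper dismisses as ``not too hard to see,'' which is a welcome addition but not a different argument.
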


\begin{proof}
Let $P$ denote the set of all primes in $\mathbb{N}$ and let $R\subseteq P$ be any subset. Consider
the fields $\mathbb{Q}\left( \sqrt{R}\right) $ generated by square roots of all the primes in $R$.
Each of these fields is countable, and it's not too hard to see that they are pairwise
non-isomorphic. Now adjoin to each $\mathbb{Q}\left( \sqrt{R}\right) $ countably many
indeterminates $X=\left\{ X_{i}:i\in \mathbb{N}\right\}$,
i.e. consider the field of rational functions $\mathbb{Q}\left( \sqrt{R}%
,X\right) $ in indeterminates $X$. The fields $\mathbb{Q}\left( \sqrt{R}%
,X\right) $ are all countable, pairwise non-isomorphic and have infinite \ transcendence degree
over the prime field $\mathbb{Q}$.

\ \ By the result of Schreier and van der Waerden \cite{shreier-isom} the groups
$\mathrm{PSL}_{m}\left( K\right) ,$ $\mathrm{PSL}_{m^{\prime }}\left( K^{\prime }\right) ,$ $\left(
m,m^{\prime }\geq 2\right) $ can be isomorphic only when $m=m^{\prime }\,$, excluding the case of
$\mathrm{PSL}_{2}\left( \mathbb{F}_{7}\right) \simeq \mathrm{PSL}_{3}\left( \mathbb{F}_{2}\right)
$. Furthermore, if $m=m^{\prime }>2$, the isomorphism is possible only when the fields $K,K^{\prime
}$ are isomorphic. The same is true when $m=m^{\prime
}=2\,,$ excluding the case $\left\{ K,K^{\prime }\right\} =\left\{ \mathbb{F}%
_{4},\mathbb{F}_{5}\right\} $. We conclude that the groups $\mathrm{\mathrm{%
PSL}_{m}\left( \mathbb{Q}\left( \sqrt{R},X\right) \right) ,}m\geq 2,$ $%
R\subseteq P$, are pairwise non-isomorphic.

Similar construction can be carried over when the prime field is finite.
\end{proof}

\section{Open problems}
There are several open problems, concerning \ig.
\begin{enumerate}

\item : Is an infinite compact group \tig~ \iff it is connected abelian by profinite \cite{KLS2}?

\item : Is $\mathrm{SL}_{n}(\mathbb{Z}),(n\geq 3)$ \ig~ \cite{KLS2}?

\item : Is $\mathrm{SL}_{n}(\mathbb{Q}),(n\geq 2)$ \ig~  \cite{KLS2}?

\item : Is $\mathrm{SL}_{2}(\mathbb{R})$, viewed as an abstract group, \ig~ (Gelander)?

\item : Is every \ig~-linear group virtually solvable \cite{KLS2}?



\item  Is a finite index subgroup of an \ig~ group necessarily \ig~ (Wiegold)?
 According to the referee's advise this can be generalized as follows: Is a lattice in a \tig~ group an \ig~ group? Regretfully the answer is negative - as we proved $\sltr$ is \tig~ but contains a cocompact discrete group which is \ig~ by Gelander's result. It would be nice to describe Lie groups and their lattices for which the above question has a positive answer.

\item

``I think it is worth noting that every group outside \ig~ that I know satisfies no non-trivial
law, and it would be nice to have an example with a non-trivial law.
 For example, can a Tarski like group of finite exponent be outside \ig~?'' \cite{wiegold77}.
\end{enumerate}

\section{Acknowledgements}
\begin{acknowledgement}
 This paper was inspired by numerous discussions within the Midrasha seminar at
 the Weizmann's Institute of Science, which we admirably acknowledge.
 The research was supported by the Weizmann Institute of Science, SFB 701 of     Bielefeld University and
the program of fundamental scientific research of the SB RAS I.1.1., project 0314-2019-0004.

 Both authors are thankful to the referee for constructive comments and recommendations.
\end{acknowledgement}

\end{document}